\numberwithin{equation}{section}
\newcommand{\Z}{\mathbb{Z}}
\newcommand{\N}{\mathbb{N}}
\newtheorem{theorem}{Theorem}[section]
\newtheorem{lemma}[theorem]{Lemma}
\newtheorem{corollary}[theorem]{Corollary}
\theoremstyle{definition}
\begin{document}

\title[Weighted EGZ Constant]{Weighted EGZ Constant for $p$-groups of rank 2}

\author{Filipe A. A. Oliveira, Hemar T. Godinho \and Ab\'{i}lio Lemos}

\address{Instituto Federal de Educa\c c\~ao, Ci\^encia e Tecnologia do Rio Grande do Sul-Campus Farroupilha, Farroupilha-RS, Brazil}

\email{filipe.oliveira@farroupilha.ifrs.edu.br}

\address{Departamento de Matem\'{a}tica, Universidade Federal de Vi\c cosa, Vi\c cosa-MG, Brazil}

\email{abiliolemos@ufv.com.br}

\address{Departamento de Matem\'{a}tica, Universidade de Bras\'ilia, Bras\'ilia-DF, Brazil}

\email{hemar@mat.unb.br}

\subjclass[2010]{11B30, 11B75, 20K01}

\keywords{Finite abelian group, weighted zero-sum, zero-sum problem}

\begin{abstract}
Let $G$ be a finite abelian group of exponent $n$, written additively, and let $A$ be a subset of $\Z$. The constant $s_A(G)$ is defined as the smallest integer $\ell$ such that any sequence over $G$ of length at least $\ell$ has an $A$-weighted zero-sum of length $n$ and  $\eta_A(G)$ defined as the smallest integer $\ell$ such that any sequence over $G$ of length at least $\ell$ has an $A$-weighted zero-sum of length at most $n$. Here we prove that, for $\alpha \geq \beta$,  and $A=\left\{x\in\mathbb{N}\; : \; 1 \le a \le p^{\alpha} \; \mbox{ and }\; \gcd(a, p) = 1\right \}$, we have  $s_{A}(\Z_{p^{\alpha}}\oplus \Z_{p^\beta}) = \eta_A(\Z_{p^{\alpha}}\oplus \Z_{p^\beta}) + p^{\alpha}-1 = p^{\alpha} + \alpha +\beta$ and classify all the extremal  $A$-weighted zero-sum free sequences.

\end{abstract}

\maketitle

\section{Introduction}
 
Let $G$ be a finite abelian group of exponent $n$, written additively, and let $A$ be a subset of $\Z$. The constant $s_A(G)$ is defined as the smallest integer $\ell$ such that any sequence $x_1x_2\cdots x_m$ of elements of $G$ with $m\ge \ell$ has a subsequence $x_1^{\ast}\cdots x_n^{\ast}$ such that $a_1x_1^{\ast}+ \cdots+ a_nx_n^{\ast} = 0$ in $G$, where $a_1, \ldots, a_n \in A$. The set $A$ is called the set of weights, and the subsequence $x_1^{\ast} \cdots x_n^{\ast}$ is called an $A$-weighted zero-sum sequence of length $n$. It is habitual to pair up the constant $s_A(G)$ with the constant $\eta_A(G)$ defined as the smallest integer $\ell$ such that any sequence over $G$ of length at least $\ell$ has an $A$-weighted zero-sum of length at most $n$. The question here is to find lower and upper bounds (or better yet, exact values) for the constants $s_A(G)$ and $\eta_A(G)$.

When considering $A = \{1\}$, the constant $s_A(G)$ is known as the Erd\"os-Ginzburg-Ziv constant (or the EGZ constant), and is denoted by $s(G)$. This is a classical area of research and we refer the reader to \cite{gaogersurvey} and \cite{chintamietal} for a survey and recent contributions to the theory.

Let us now consider the set of weights
$$
A = \{a \in \N \, | \, 1 \le a \le n, \mbox{ and } \gcd(a, n) = 1\}.
$$
In the last years many authors have considered these constants $s_A(G)$ and $\eta_A(G)$, associated with the set $A$ above. We list here some of these contributions.
\begin{enumerate}
\item[(i)] $s_A(\Z_2^r)=2^r+1$ (see \cite{ha});
\item[(ii)] $s_A(\Z_3)=4$ (see \cite{Ad2}), $s_A(\Z_3^2)=5$ (see \cite{Ad3}), $s_A(\Z_3^3)=9$ (see \cite{GL,MO}), $s_A(\Z_3^4)=21$, $s_A(\Z_3^5)=41$ and $s_A(\Z_3^6)=113$ (see \cite{MO});
\item[(iii)] $s_A(\Z_4)=6$ and $s_A(\Z_6)=8$ (see \cite{Ad2}); $s_A(\Z^2_4)=8$ (see \cite{Ad});
\item[(iv)] $s_A(\Z_2\oplus \Z_4)=7$ (see \cite{MO2,BK}), $s_A(\Z_2^2\oplus \Z_4)=8$ (see \cite{BK}) and $s_A(\Z_2\oplus \Z_6)=9$ (see \cite{MO2});
\end{enumerate}

The following results relate these two constants.
\begin{enumerate}
\item[(i)] $s_A(\Z_2^r)=\eta_A(\Z_2^r)+2-1$ (see \cite{ha});
\item[(ii)] $s_A(\Z_3)=\eta_A(\Z_3)+3-1$ (see \cite{Ad2}), $s_A(\Z_3^2)=\eta_A(\Z_3^2)+3-1$ (see \cite{Ad3,BK});
\item[(iii)] $s_A(\Z_4)=\eta_A(\Z_4)+4-1$ and $s_A(\Z_6)=\eta_A(\Z_6)+6-1$ (see \cite{Ad2});
\item[(iv)] $s_A(\Z^2_4)=\eta_A(\Z_4^2)+4-1$ (see \cite{Ad,BK});
\item[(v)] $s_A(\Z_2\oplus \Z_4)=\eta_A(\Z_2\oplus \Z_4)+4-1$ (see \cite{MO2,BK}), $s_A(\Z_2^2\oplus \Z_4)=\eta_A(\Z_2^2\oplus \Z_4)+4-1$ (see \cite{BK}) and $s_A(\Z_2\oplus \Z_6)=\eta_A(\Z_2\oplus \Z_6)+6-1$ (see \cite{MO2});
\item[(vi)] $s_A(\Z_{p^s}\oplus \Z_{p^r})=\eta_A(\Z_{p^s}\oplus \Z_{p^r})+p^r-1$, where $p$ is an odd prime number and $s\in\left\{1,2\right\}$ (see \cite{CHI,CH2});
\end{enumerate}

The results above suggests that we should have $s_A(G) = \eta_A(G) + n - 1$, but this
was proved wrong by Godinho, Lemos and Marques (see \cite{GL}), who showed that
$s_A(\Z_3^r)=2\eta_A(\Z_3^r)-1 >\eta_A(\Z_3^r)+3-1$ for $r\geq3$.

In the case of $G = \Z_p^r$, for $p$ a prime number, Adhikari \textit{et al.} (see [2]) proved that $s_A(G) = p + r$, for all $p > r$. In this direction Luca [19] (see also [16]) proved that $s_A(\Z_n) = n+\Omega(n)$ and he classified the extremal $A$-weighted zero-sum free sequences for $n = p^k$, where $\Omega(n)$ denotes the total number of prime divisors of $n$ (counted with multiplicity). This result was conjectured by Adhikari \textit{et al.} (see [3]).

Recently, Chintamani and Paul (see \cite{CHI,CH2}) proved that $s_A(\Z_{p^s}\oplus \Z_{p^\alpha})= p^\alpha + \alpha +s$ and $\eta_A(\Z_{p^s}\oplus \Z_{p^\alpha})= \alpha +s + 1$, for $s\in\left\{1,2\right\}$, $\alpha\geq s$ and $p$  an odd prime number. They also classify the extremal $A$-weighted zero-sum free sequences and extended these results proving that $s_A(\Z_{p^s}\oplus \Z_{n})\leq n+\Omega(n)+2s$, for $s\in\left\{1,2\right\}$, provided $p^s|n$.

In this paper we present a generalization of these results, proving that
 $s_A(\Z_{p^{\alpha}}\oplus \Z_{p^\beta})=\eta_A(\Z_{p^{\alpha}}\oplus \Z_{p^\beta}) + p^\alpha -1 = p^\alpha + \alpha +\beta$, where $p$ is an odd prime number and $\alpha \ge \beta$ are positive integers. We also classify the extremal $A$-weighted zero-sum free sequences.
\section{Notations, terminologies and preliminary results}
 Let $\mathbb{N}_{0}= \mathbb{N} \cup \{ 0 \}$  and define $[a,b]=\left\{ x\in\mathbb{N}_{0}:a\leq x\leq b\right\} $ for $a,b\in \mathbb{N}_{0}$. Throughout this paper  we are going to consider $p$ an odd prime number, $\alpha,\beta \in \mathbb{N}$, 
 \begin{equation}\label{A}
 G=\Z_{p^{\alpha}}\oplus \Z_{p^\beta}\;\;  \mbox{and}\;\;  A=\left\{x\in\mathbb{N}\; : \; 1 \le a \le p^{\alpha} \; \mbox{ and }\; \gcd(a, p) = 1\right\}.
 \end{equation}
 
 Let  $S=x_1 x_2 \cdots x_m$ be a sequence of elements of $G$ and denote by $\left|S\right|=m$,  the length of $S$. If $T$ is a subsequence of $S$, we will represent it as $T|S$. If $S_{1}$ and $S_{2}$ are sequences over $G$, we represent by $S_{1}S_{2}$ the obvious sequence having $S_{1}$ and $S_{2}$ as subsequences. If $T|S$, we will represent the subsequence of $S$ obtained by extracting from $S$ all the terms of $T$ by $ST^{-1}$.
 
The proof of the next lemma is trivial and will be omitted.
\begin{lemma} \label{ob1}
Let  $S = x_1 x_2 \cdots  x_\ell$ be a sequence over $G$. Then $S$ is an $A$-weighted zero-sum sequence if, and only if, either  
$$
S' = (u_1x_1)(u_2x_2) \cdots (u_\ell x_\ell) \;\mbox{ or }\;
\Theta(S) =\Theta(x_1)\Theta(x_2) \cdots  \Theta(x_\ell)
$$
is  an $A$-weighted zero-sum sequence, with $u_1, \cdots, u_{\ell} \in A$ (see \eqref{A}), and  $\Theta \in \mbox{Aut}(G)$.
\end{lemma}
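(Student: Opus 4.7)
The plan is to check both directions of each of the two equivalences, using only that every element of $A$ is a unit modulo the exponent $p^{\alpha}$ of $G$ and that every $\Theta\in\mathrm{Aut}(G)$ is a $\mathbb{Z}$-linear bijection. No deeper structural fact about $G$ is needed, which is why the authors call the proof trivial.

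For the equivalence with $S'=(u_1 x_1)\cdots(u_\ell x_\ell)$, first suppose $S$ is $A$-weighted zero-sum, so $\sum_{i=1}^{\ell} a_i x_i=0$ for some $a_i\in A$. Since $\gcd(u_i,p)=1$ and $p^{\alpha}$ is the exponent of $G$, each $u_i$ is invertible modulo $p^{\alpha}$; let $v_i\in[1,p^{\alpha}]$ be its inverse. Define $b_i\in[1,p^{\alpha}]$ to be the reduction of $a_i v_i$ modulo $p^{\alpha}$. Then $b_i$ is still coprime to $p$, hence $b_i\in A$, and
\[
\sum_{i=1}^{\ell} b_i (u_i x_i)=\sum_{i=1}^{\ell} a_i x_i=0,
\]
so $S'$ is $A$-weighted zero-sum. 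The converse is the same calculation in reverse: weights $c_i\in A$ for $S'$ yield weights equal to the reduction of $c_i u_i$ modulo $p^{\alpha}$, which again lie in $A$, for $S$.

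For the equivalence with $\Theta(S)$, recall that any additive automorphism of $G$ commutes with integer scalar multiplication, that is, $\Theta(ax)=a\Theta(x)$ for every $a\in\mathbb{Z}$ and $x\in G$. Consequently
\[
\sum_{i=1}^{\ell} a_i\,\Theta(x_i)=\Theta\!\left(\sum_{i=1}^{\ell} a_i x_i\right),
\]
and since $\Theta$ is a bijection with $\Theta(0)=0$, the left-hand side vanishes if and only if $\sum a_i x_i=0$. Thus the same weights $a_i\in A$ witness the zero-sum property on both sides.

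No genuine obstacle arises; the only point deserving attention is the range requirement $1\le b_i\le p^{\alpha}$ imposed by the definition of $A$, which is handled by reducing modulo $p^{\alpha}$ and observing that coprimality with $p$ is preserved by products and reductions. Both equivalences therefore reduce to elementary manipulations, consistent with the authors' remark that the proof may be omitted.
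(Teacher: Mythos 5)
Your proof is correct and is precisely the routine verification the authors had in mind when they declared the proof trivial and omitted it: the key points (each $u_i$ is a unit modulo the exponent $p^{\alpha}$, so reweighting by inverses stays inside $A$, and any $\Theta\in\mathrm{Aut}(G)$ is additive, hence commutes with integer weights and reflects zero) are exactly the right ones. No discrepancy with the paper to report.
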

\begin{lemma} \label{h4}
Let $G=\Z_{p^{\alpha}}\oplus \Z_{p^\beta}$, with $\alpha \geq \beta$. Given an element $(a, b) \in G$ with $\gcd(a,p)=1$, there exists an automorphism $\Theta \in \mbox{Aut}(G)$ such that $\Theta(a,b) = (1,0)$ and $\Theta(0,1)=(0,1)$.
\end{lemma}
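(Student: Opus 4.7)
The plan is to construct $\Theta$ explicitly by prescribing its action on the standard generators and then verifying that the resulting map is an automorphism. Because $\alpha \geq \beta$, every endomorphism of $G = \Z_{p^\alpha} \oplus \Z_{p^\beta}$ has the form
$$\Theta(m,n) = (xm + yn,\; zm + wn),$$
with $x \in \Z_{p^\alpha}$, $z,w \in \Z_{p^\beta}$, and $y \in p^{\alpha-\beta}\Z_{p^\alpha}$; the divisibility constraint on $y$ is forced so that the image $(y,w)$ of $(0,1)$ has order dividing $p^\beta$.

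Imposing $\Theta(0,1) = (0,1)$ immediately gives $y = 0$ and $w = 1$, so that $\Theta(a,b) = (xa,\, za + b)$. The remaining condition $\Theta(a,b) = (1,0)$ reduces to the two congruences $xa \equiv 1 \pmod{p^\alpha}$ and $za \equiv -b \pmod{p^\beta}$. Since $\gcd(a,p) = 1$, the element $a$ is a unit in both $\Z_{p^\alpha}$ and $\Z_{p^\beta}$, and I would take $x \equiv a^{-1} \pmod{p^\alpha}$ and $z \equiv -ba^{-1} \pmod{p^\beta}$.

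Finally, I would check that the endomorphism $\Theta$ so defined is in fact bijective. As $G$ is finite, it suffices to verify injectivity: if $\Theta(m,n) = (0,0)$, then $a^{-1}m \equiv 0 \pmod{p^\alpha}$ forces $m \equiv 0$, and substituting back gives $n \equiv 0 \pmod{p^\beta}$. There is no real obstacle here; the lemma is essentially the observation that $\gcd(a,p) = 1$ provides the needed inverse of $a$ in both component rings, while $\alpha \geq \beta$ guarantees that $(1,0)$ and $(a,b)$ share the maximal order $p^\alpha$ and can therefore lie in a common $\mathrm{Aut}(G)$-orbit.
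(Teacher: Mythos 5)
Your proposal is correct and arrives at essentially the same automorphism as the paper, namely $(m,n)\mapsto(a^{-1}m,\; n-ba^{-1}\mu(m))$ where $\mu$ is reduction mod $p^{\beta}$; you merely derive it more systematically from the general matrix form of an endomorphism of $\Z_{p^{\alpha}}\oplus\Z_{p^{\beta}}$ and spell out the injectivity check that the paper leaves as "simple to verify."
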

\begin{proof}

Let $\mu : \Z_{p^{\alpha}} \longrightarrow \Z_{p^{\beta}}$ be the canonical homomorphism defined as $\mu(u)\equiv u \pmod{p^{\beta}}$. Given $(a,b)\in G$ with $\gcd(a,p)=1$, there exists $a^{-1}\in \Z_{p^\alpha}$ such that $a\cdot a^{-1}=1$ in $\Z_{p^\alpha}$. Now it is simple to verify that 
$\Theta(x,y)=( xa^{-1} ,\, y - b\mu(a^{-1}x))$ is an automorphism of $G$,
and $\Theta(a,b) = (1,0)$,  $\Theta(0,1)=(0,1)$.
\end{proof}

\begin{lemma} \label{h1}
Let  $a_1, a_2, \ldots, a_m \in \Z$  and define $N(a_1, a_2, \ldots, a_m)$ to be the number of solutions of 
\begin{equation}\label{eq1}
a_1x_1 + a_2x_2 + \cdots + a_mx_m \equiv 0 \pmod p
\end{equation}
with $x_j \in [1, p-1]$, for all $j \in [1, m]$. If $\gcd(a_{1},p)=1$ then
$$N(a_1, a_2, \ldots, a_m)= (p-1)^{m-1} - N(a_2, \ldots, a_m).$$
\end{lemma}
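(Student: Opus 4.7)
The plan is to fix the last $m-1$ variables and count, for each fixed assignment, the number of values of $x_1$ that complete a solution. Since $\gcd(a_1,p)=1$, the coefficient $a_1$ is invertible modulo $p$, so the congruence
$$a_1 x_1 \equiv -(a_2 x_2 + \cdots + a_m x_m) \pmod{p}$$
has a unique solution $x_1$ in any complete residue system of size $p$. In particular, there is exactly one candidate $x_1 \in [0,p-1]$, and this candidate lies in the admissible range $[1,p-1]$ precisely when it is nonzero modulo $p$, i.e. when $a_2 x_2 + \cdots + a_m x_m \not\equiv 0 \pmod{p}$.

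Consequently, $N(a_1,\ldots,a_m)$ equals the number of tuples $(x_2,\ldots,x_m) \in [1,p-1]^{m-1}$ that fail the congruence $a_2 x_2 + \cdots + a_m x_m \equiv 0 \pmod{p}$. The total number of such tuples is $(p-1)^{m-1}$, while the tuples that satisfy the congruence are counted by $N(a_2,\ldots,a_m)$ by definition. Subtracting gives
$$N(a_1,a_2,\ldots,a_m) \;=\; (p-1)^{m-1} - N(a_2,\ldots,a_m),$$
as required.

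There is no real obstacle here: the identity reduces to the elementary observation that, once the other variables are fixed, the invertibility of $a_1$ modulo $p$ pins $x_1$ down uniquely mod $p$, and one only has to decide whether this unique residue is $0$ or not. The entire argument is a one-line case split on the value of $a_2 x_2 + \cdots + a_m x_m$ modulo $p$.
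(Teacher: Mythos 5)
Your proof is correct and follows essentially the same route as the paper: fix $x_2,\ldots,x_m$, use the invertibility of $a_1$ modulo $p$ to pin down $x_1$ uniquely, and observe that this unique residue is admissible exactly when $a_2x_2+\cdots+a_mx_m\not\equiv 0\pmod p$. Nothing further is needed.
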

\begin{proof}
Observe that $(x_1, x_2,\ldots, x_m)$ is a solution of \eqref{eq1} with 
$x_1,x_2,\ldots,x_m \in [1,p-1]$ if, and only if, $ a_2x_2 + \cdots + a_mx_m \not\equiv 0 \pmod{p}$, otherwise we would have 
$a_1x_1\equiv 0\pmod{p}$ and $\gcd(a_1x_1,p)=1$, an impossibility. On the other hand, for any $x_{2},\ldots, x_{m}\in [1,p-1]$ such that $a_2x_2 + \cdots + a_mx_m  = -b \not\equiv 0 \pmod{p}$, the equation $a_{1}x_{1} \equiv b \pmod{p}$ has exactly one solution. This concludes  the proof.
\end{proof}

We close this section with  a particular case of a result proved by F. Luca in \cite{LUC}, and an immediate consequence of this.

\begin{lemma}\label{luca} 
 Let  $r\geq 2$ and $S = a_1 a_2 \cdots a_{r}$, a sequence over $\Z_{p^{\alpha}}$. If there 
 are $i,j\in[1,r]$ such that $\gcd(a_{i}a_{j},p)=1$ then,  for any given $b\in \Z$, there exist
 $x_{1},\ldots, x_{r}\in A$  (see \eqref{A}) such that
$$a_{1}x_{1}+\cdots + a_{r}x_{r} \equiv b  \, \pmod{p^{\alpha}}.$$
\end{lemma}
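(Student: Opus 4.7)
The plan is to reduce the lemma to the two-variable case by a trivial padding, and then solve that case by a direct counting argument in $(\Z_{p^{\alpha}})^{\times}$ that uses only that $p$ is odd. Without loss of generality I take $i=1$ and $j=2$, so that both $a_{1}$ and $a_{2}$ are coprime to $p$. Since $1\in A$, I set $x_{3}=\cdots=x_{r}=1$ and put $c=b-\sum_{k=3}^{r}a_{k}$. This reduces the lemma to the assertion that for $r=2$, $\gcd(a_{1}a_{2},p)=1$, and any $c\in\Z$, there exist $x_{1},x_{2}\in A$ with $a_{1}x_{1}+a_{2}x_{2}\equiv c\pmod{p^{\alpha}}$.

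For this reduced problem I would exploit the fact that $A$ is a complete set of representatives modulo $p^{\alpha}$ of the unit group $U=(\Z_{p^{\alpha}})^{\times}$. Since $a_{1},a_{2}\in U$, the maps $x_{1}\mapsto a_{1}x_{1}$ and $x_{2}\mapsto a_{2}x_{2}$ each send $A$ onto $U$ modulo $p^{\alpha}$, so it suffices to produce a single unit $v$ for which $c-v$ is also a unit. Now $v$ is a unit iff $v\not\equiv 0\pmod{p}$, and $c-v$ is a unit iff $v\not\equiv c\pmod{p}$; hence the admissible residue classes of $v$ modulo $p$ are those in $\{0,1,\ldots,p-1\}\setminus\{0,\,c\bmod p\}$, a set of size at least $p-2$.

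The only real point, and the essential use of the hypothesis that $p$ is odd, is that this admissible set is nonempty. Since $p\geq 3$, one has $p-2\geq 1$, so some admissible class exists and contains $p^{\alpha-1}$ representatives in $A$; picking one of them as $v$ and then setting $x_{2}=a_{2}^{-1}v$ and $x_{1}=a_{1}^{-1}(c-v)$ (each realized inside $A$) finishes the proof. The argument visibly breaks down for $p=2$ with $c$ odd, confirming that the oddness of $p$ carries the entire content of the lemma; the remaining manipulations are routine.
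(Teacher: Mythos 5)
Your proof is correct. Note, however, that the paper does not prove this lemma at all: it is stated as ``a particular case of a result proved by F.~Luca in \cite{LUC}'' and simply cited. So your argument is not a variant of the paper's proof but a self-contained replacement for the citation. The reduction to $r=2$ by weighting $a_3,\dots,a_r$ with $1\in A$ is sound, and the two-variable case is handled correctly: since $a_1,a_2$ are units modulo $p^{\alpha}$ and $A$ is a full set of representatives of $(\Z_{p^{\alpha}})^{\times}$, the problem is exactly that of writing $c$ as a sum $v+w$ of two units, which reduces modulo $p$ to finding a residue class avoiding $\{0,\,c\bmod p\}$; this is possible precisely because $p\ge 3$, and each nonzero class modulo $p$ is realized by $p^{\alpha-1}$ elements of $A$. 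This is the standard argument behind Luca's lemma restricted to prime powers, and it correctly isolates where the oddness of $p$ is used (the lemma indeed fails for $p=2$ and $c$ odd). What the citation to Luca buys over your argument is generality --- his result covers $\Z_n$ for arbitrary $n$, where the counting of admissible residue classes must be done prime by prime via the Chinese Remainder Theorem --- but for the prime-power case needed here your direct proof is complete and arguably preferable for a self-contained exposition.
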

\begin{lemma} \label{h21}
Let  $m \ge 2$ and  $a_1, a_2, \ldots, a_{m + 1}\in \Z$. Then there exists  a set of indexes $I\subset [1, m + 1]$ of length $|I| = m$ such that $\sum_{i \in I} u_ia_i \equiv 0 \pmod p$ for some choice of integers $u_i\in [1, p-1]$, for all $i \in I$.
\end{lemma}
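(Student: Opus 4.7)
The plan is to separate the indices of $[1,m+1]$ by divisibility by $p$: set $S := \{i \in [1,m+1] : \gcd(a_i, p) = 1\}$, and choose the index $i_0$ to discard in terms of $|S|$. The subset $I := [1,m+1] \setminus \{i_0\}$ will serve as the required set of size $m$; the weights $u_i \in [1,p-1]$ will then be produced either trivially or by invoking Lemma \ref{h1}.

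If $|S| \le 1$, I discard an index in $S$ (or any index when $S = \emptyset$); every remaining $a_i$ is divisible by $p$, so any weights $u_i \in [1,p-1]$ satisfy $\sum_{i \in I} u_i a_i \equiv 0 \pmod p$. The interesting case is $|S| \ge 2$, where I choose $i_0$ so that $|I \cap S| \ge 2$: if $|S| \ge 3$ any $i_0$ works, and if $|S| = 2$ the hypothesis $m+1 \ge 3$ forces the existence of some $i_0 \notin S$. After relabeling so that the two indices $j_1, j_2 \in I \cap S$ come first among the elements of $I$, two successive applications of Lemma \ref{h1} to the coefficients indexed by $I$ yield
\[
N = (p-1)^{m-1} - (p-1)^{m-2} + N(\cdots) \;\ge\; (p-2)(p-1)^{m-2} \;>\; 0,
\]
because $p$ is odd. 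Positivity of $N$ provides at least one admissible tuple of weights $(u_i)_{i\in I}\in [1,p-1]^{|I|}$ satisfying the required congruence.

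The main technical wrinkle is the selection of $i_0$ in the subcase $|S|=2$, which is precisely where the hypothesis $m \ge 2$ gets used; everywhere else the proof is an elementary case split combined with a clean invocation of Lemma \ref{h1}. One could equally well bypass the counting formula and, after fixing $u_i = 1$ for $i \in I \setminus \{j_1,j_2\}$ and setting $c \equiv \sum_{i \in I \setminus \{j_1,j_2\}} a_i \pmod p$, solve the two-term congruence $a_{j_1} u_{j_1} + a_{j_2} u_{j_2} \equiv -c \pmod p$ directly: for each $u_{j_1} \in [1,p-1]$ the required $u_{j_2} \equiv a_{j_2}^{-1}(-c - a_{j_1} u_{j_1}) \pmod p$ is forced, and it lies in $[1,p-1]$ for all but at most one value of $u_{j_1}$, so at least $p-2 \ge 1$ valid pairs survive.
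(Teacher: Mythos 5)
Your proof is correct and follows the same case split as the paper: either $m$ of the $a_i$ are divisible by $p$ (in which case any weights work), or at least two of the retained coefficients are units modulo $p$. The paper disposes of the latter case by citing Lemma \ref{luca}, whereas you re-derive the needed representability of $0$ from Lemma \ref{h1} (or by the direct two-term congruence argument); this is a self-contained substitution for the citation, not a genuinely different route.
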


\begin{proof}
The result is trivial if there are $m$ integers $a_{j}\equiv 0 \pmod{p}$,  otherwise it is direct consequence of Lemma \ref{luca}.
\end{proof}
\section{$A$-weighted zero-sum sequences}

\begin{lemma} \label{h2}
Let $p$ be an odd prime, $m \ge 5$ an integer, and $$S = (1,0) (a_2,b_2) (a_3, b_3) (a_4, b_4) \cdots (a_{m + 1}, b_{m + 1})$$ a sequence over $G$ (see \eqref{A})  where $\gcd(b_2,p)=1$, $p|a_2$. If there are two indexes $i, j \in [3, m + 1] $ (including the case $i = j$) such that $\gcd(a_ib_j, p) = 1$, then $S$ has an $A$-weighted zero-sum subsequence $S'$ of length $m$.
\end{lemma}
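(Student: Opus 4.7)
The goal is to extract an $A$-weighted zero-sum subsequence of length $m$ from $S$ (of length $m+1$) by excluding one element and assigning $A$-weights to the rest. I aim to exclude some $(a_\ell,b_\ell)$ with $\ell\in[3,m+1]$ and set $I=[3,m+1]\setminus\{\ell\}$. For any $u_i\in A$ with $i\in I$, the second-coordinate equation modulo $p^\beta$ determines $u_2$ (using $\gcd(b_2,p)=1$), and that residue lifts to $A$ iff $\sum_{i\in I}u_i b_i\not\equiv 0\pmod p$; the first-coordinate equation then determines $u_1\pmod{p^\alpha}$, and that residue lifts to $A$ iff (after using $p\mid a_2$) $\sum_{i\in I}u_i a_i\not\equiv 0\pmod p$. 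So it suffices to find $\ell$ and $(u_i)_{i\in I}\subset A$ making both reduced sums nonzero modulo $p$.

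\textbf{The two regimes.} If the hypothesis can be realized with $i=j=k$, then $v_k:=(a_k,b_k)\bmod p$ lies in $(\F_p^\ast)^2$; I would pick $\ell\in[3,m+1]\setminus\{k\}$, set $u_{i'}=1$ for $i'\in I\setminus\{k\}$, and let $u_k\in\F_p^\ast$ vary. Each reduced condition is linear in $u_k$ with unit coefficient, excluding at most one residue modulo $p$, so at least $p-3$ admissible $u_k$ remain -- positive for $p\geq 5$. If no such $k$ exists but $i\neq j$ witnesses the hypothesis, then necessarily $p\mid b_i$ and $p\mid a_j$, so $(a_i,b_i)\equiv(\ast,0)$ and $(a_j,b_j)\equiv(0,\ast)\pmod p$; I would pick $\ell\in[3,m+1]\setminus\{i,j\}$ (possible since $m-1\geq 4$), set $u_{i'}=1$ for $i'\in I\setminus\{i,j\}$, and observe that the first reduced sum modulo $p$ depends only on $u_i$ and the second only on $u_j$, each excluding one residue, so admissible $u_i,u_j\in A$ exist for any $p\geq 3$.

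\textbf{Main obstacle.} The tight regime is $p=3$ in the first case, where $p-3=0$ and the counting is vacuous. I would then exploit both the $m-2\geq 3$ admissible values of $\ell$ and the freedom in the base weights $u_{i'}$ for $i'\in I\setminus\{k\}$. Rescaling the coordinates by $a_k^{-1}$ and $b_k^{-1}$ reformulates the obstruction: the rescaled partial sum $(\widetilde S_a,\widetilde S_b)\in\F_3^2$ must avoid $\{(1,2),(2,1)\}$. If this failed for every $(\ell,\vec u_{-k})$, flipping individual weights $u_{i'}\in\{1,2\}$ would force each rescaled vector $\tilde v_i:=(a_i/a_k,b_i/b_k)\bmod 3$ (for $i\in[3,m+1]\setminus\{k\}$) to lie in $\{(0,0),(1,2),(2,1)\}$; a short combinatorial analysis of the occurrence counts $(n_+,n_-,n_0)$ of the three possible values then shows that demanding bad behaviour for every $\ell$ requires $(n_+,n_-)$ to simultaneously satisfy conditions that cannot hold once $m-2\geq 3$. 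Executing this case check is the bulk of the technical work.
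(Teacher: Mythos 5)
Your overall strategy is sound and genuinely different from the paper's. You fix which term to discard, solve exactly for $u_2$ and then $u_1$ (using $\gcd(b_2,p)=1$ and $p\mid a_2$), and reduce everything to making the two partial sums $\sum_{i\in I}u_ia_i$ and $\sum_{i\in I}u_ib_i$ simultaneously nonzero modulo $p$. The paper performs the same reduction (always discarding the last term) but then finishes with one uniform counting argument: among the $(p-1)^{m-2}$ tuples $(v_3,\ldots,v_m)\in[1,p-1]^{m-2}$, at most $N(a_3,\ldots,a_m)+N(b_3,\ldots,b_m)\le 2(p-1)^{m-3}-1$ are bad by Lemma \ref{h1}, the crucial ``$-1$'' coming from Lemma \ref{h21}, which guarantees $N(a_4,\ldots,a_m)\ge 1$. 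That single ``$-1$'' is exactly what makes the union bound strict at $p=3$, so the paper needs no case split at all. Your two easy regimes (the off-diagonal witness $i\ne j$ with disjoint free variables, and the diagonal witness for $p\ge 5$) are correct as written.

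The gap is the diagonal regime at $p=3$, which you explicitly defer (``the bulk of the technical work''). For the record, your sketch does close: if every choice of $\ell$ and of base weights fails, each rescaled vector $\tilde v_{i'}$ is forced into $\{(0,0),(1,2),(2,1)\}$, the rescaled partial sum is then always of the form $(d,-d)$ with each nonzero $\tilde v_{i'}$ contributing $\pm u_{i'}=\pm 1$ to $d$ with freely choosable sign, so $d=0$ is attainable unless exactly one nonzero vector survives the deletion of $\ell$ --- and $m-2\ge 3$ always lets you choose $\ell$ to avoid that situation. But as submitted the argument stops precisely at its hardest point, asserting rather than proving that the occurrence counts $(n_+,n_-,n_0)$ cannot satisfy the failure conditions; a referee would require this to be written out. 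If you want to keep your structure, either execute that case analysis or import the paper's device: one application of Lemma \ref{h21} to $a_4,\ldots,a_{m+1}$ removes a single bad tuple from the count, which is all that is needed to make the counting argument work uniformly for every odd $p$, including $3$.
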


\begin{proof}
Let us suppose, without losing  generality, that either (i) $\gcd(a_{3}b_{3}, p) = 1$, or (ii) $\gcd(a_3b_4, p) = 1$, $p|a_4$ and $p|b_3$.
By Lemma \ref{h21}, making the necessary changes in the indexes, in any case we can suppose that there is a choice of integers $u_4, u_5, \ldots, u_m \in [1, p-1]$ such that $\sum_{i = 4}^m u_ia_i \equiv 0 \pmod p$. Hence $N(a_4,\ldots, a_{m}) \geq 1$, and $N(a_3,a_{4},\ldots, a_m) \leq (p-1)^{m-3}-1$, according to Lemma \ref{h1} for $\gcd(a_{3},p)=1$  in any case. On the other hand, also by Lemma  \ref{h1}, we have $N(b_3,\ldots, b_{m})\leq (p-1)^{m-3}$ for either  case (i), or case (ii).
Since  
$$(p-1)^{m-2} > 2(p-1)^{m-3} -1 \geq N(a_3,\ldots, a_{m}) + N(b_3,\ldots, b_{m}),$$
there must exist  $v_3, v_4, \ldots, v_m\in [1,p-1]$, such that
$$a_3v_3 + a_4v_4 + \cdots + a_mv_m = a, \;\;\;
b_3v_3 + b_4v_4 + \cdots + b_mv_m =b,
$$
and  $\gcd(ab, p) = 1$. Since $ \{-b_2a + ba_2, \; -b, \;b_2v_i \} \subset A$, for 
$\gcd(abb_2v_i,p)=1$ and $p|a_2$, we have 
$$(-b_2a+ba_2)(1,0)+(-b)(a_2,b_2) + \sum_{i=3}^m b_2v_i(a_i,b_i) = (0,0)\;\;\mbox{in $G$},$$
as wanted.
\end{proof}
\begin{lemma} \label{fi1}
Let   $G = \Z_{p^{\alpha}}\oplus \Z_{p^\beta}$,  $\alpha \ge \beta$ and $\ell \ge 4$.  Let $S = x_1x_2\cdots x_{\ell + \beta}$ be a sequence over $G$, with $x_i = (a_i, b_i)$, for  $i \in [1, \ell + \beta]$. If there are $i,j\in [1,\ell+\beta]$, $i\neq j$, such that $x_{i}=x_{j}$ and $\gcd(a_{i}a_{j},p)=1$,  then $S$ has an $A$-weighted zero-sum subsequence of length $\ell$.
\end{lemma}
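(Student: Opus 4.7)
The plan is to first normalize the repeated element and then reduce the problem to a one-variable zero-sum problem in $\Z_{p^\beta}$. By Lemma \ref{h4} there is an automorphism $\Theta \in \mathrm{Aut}(G)$ with $\Theta(x_i) = \Theta(x_j) = (1,0)$, and by Lemma \ref{ob1} it suffices to find a length-$\ell$ $A$-weighted zero-sum in $\Theta(S)$. After relabeling I may thus assume $x_1 = x_2 = (1,0)$, and write $x_k = (a_k, b_k)$ for $k \in [3, \ell+\beta]$.

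The core step is the following sub-claim, which I would prove by induction on $\beta$: for every $n \geq 2$ and every $\beta \geq 1$, any sequence of length $n + \beta$ in $\Z_{p^\beta}$ admits an $A$-weighted zero-sum subsequence of length exactly $n$. The base case $\beta = 1$ is immediate from Lemma \ref{h21}. For the inductive step, I split on the number of terms coprime to $p$. If at least two are coprime to $p$, I pick any $n$ of the given terms including two such units, lift them to $\Z_{p^\alpha}$, and apply Lemma \ref{luca} with target $b = 0$ to produce weights in $A$ whose weighted sum is $\equiv 0 \pmod{p^\beta}$. Otherwise at most one term is coprime to $p$; I discard this unit (if present), leaving at least $n + (\beta-1)$ terms all divisible by $p$. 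Dividing each by $p$ yields a sequence in $\Z_{p^{\beta-1}}$ of length at least $n + (\beta-1)$, to which the induction hypothesis applies, producing a length-$n$ weighted zero-sum modulo $p^{\beta-1}$; multiplying this zero-sum by $p$ gives a length-$n$ zero-sum modulo $p^\beta$.

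Applying the sub-claim with $n = \ell - 2 \geq 2$ to the second coordinates $b_3, \ldots, b_{\ell+\beta}$ yields $I \subset [3, \ell+\beta]$ with $|I| = \ell - 2$ and weights $u_k \in A$ such that $\sum_{k \in I} u_k b_k \equiv 0 \pmod{p^\beta}$. Setting $c \equiv \sum_{k \in I} u_k a_k \pmod{p^\alpha}$, I choose $u_1, u_2 \in A$ with $u_1 + u_2 \equiv -c \pmod{p^\alpha}$: since $p \geq 3$, at least $p - 2 \geq 1$ residues for $u_1 \bmod p$ leave $-c - u_1$ also coprime to $p$, so such a pair exists in $A$. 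The subsequence $x_1 x_2 \prod_{k \in I} x_k$ then has length $\ell$ and $A$-weighted sum $(0,0)$ in $G$. The main obstacle I expect is the inductive proof of the sub-claim, especially the length bookkeeping after discarding a possible unit term and the justification for applying Lemma \ref{luca} modulo $p^\beta$ via lifting representatives to $\Z_{p^\alpha}$.
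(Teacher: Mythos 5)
Your proof is correct, but it is organized differently from the paper's. The paper proves the lemma by induction on $\beta$ at the level of the full two-dimensional statement: the base cases $\beta\in\{1,2\}$ are imported from Chintamani and Paul, and in the inductive step it either finds two indices $i_1\neq i_2$ in $[3,\ell+\beta]$ with $\gcd(b_{i_1}b_{i_2},p)=1$ (and then concludes exactly as you do, applying Lemma \ref{luca} first to the second coordinates and then to the two copies of $(1,0)$ to fix the first coordinate), or else observes that all the remaining second coordinates in $[3,\ell+\beta-1]$ are divisible by $p$ and descends to a sequence of length $\ell+\beta-1$ over $\Z_{p^{\alpha}}\oplus\Z_{p^{\beta-1}}$, to which the induction hypothesis applies since the two terms $(1,0)$ survive the descent. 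You instead relocate the entire induction into a purely one-dimensional sub-claim about $\Z_{p^{\beta}}$ (any sequence of length $n+\beta$ has an $A$-weighted zero-sum subsequence of length exactly $n$, for $n\ge 2$), whose proof uses the same two mechanisms --- Lemma \ref{luca} when two units are present, division by $p$ otherwise --- but whose base case is just Lemma \ref{h21}. The two arguments share the same skeleton (normalize to two copies of $(1,0)$, annihilate the second coordinate with $\ell-2$ of the other terms, repair the first coordinate with the two units), but your factorization through the cyclic-group statement buys self-containedness: it does not rely on the published cases $\beta\in\{1,2\}$, and it isolates exactly where the hypothesis $\ell\ge 4$ (i.e.\ $n=\ell-2\ge 2$) enters. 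The one point worth stating explicitly is that Lemma \ref{luca}, formulated modulo $p^{\alpha}$, yields the needed congruence modulo $p^{\beta}$ simply because $\beta\le\alpha$; the paper makes the same reduction silently, and your lift-then-reduce justification is adequate.
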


\begin{proof} (Induction on $\beta$). The cases $\beta = 1\mbox{ or } 2$ were proved by Chintamani and Paul in \cite{CHI,CH2}.  We may then assume $\beta \ge 3$. 
Without loss of generality, take $x_1 = x_2$  with $\gcd(a_1a_2,p)=1$. It follows from Lemmas \ref{ob1} and \ref{h4}, that we can assume $x_1 = x_2 = (1, 0)$.

Suppose we can find $i_{1},i_{2} \in [3,\ell + \beta]$, $i_{1}\neq i_{2}$, such that $\gcd(b_{i_{1}}b_{i_{2}},p)=1$. Then take $I_1 \subset [3, \ell + \beta]$, with $|I_1| = \ell-2$ and $i_1, \,i_2 \in I_1$. According to Lemma \ref{luca} we can find $u_i \in A$, for all $i \in I_1$ such that
$$\sum_{i\in I_1} u_ib_i \equiv 0  \pmod {p^\beta}.$$
 Again by Lemma \ref{luca} we can now find $u_{1},u_{2}\in A$ such that

$$u_1a_1+u_2a_2+\sum_{i\in I_1} u_ia_i \equiv 0 \pmod {p^{\alpha}}.$$
Hence
$$u_1x_1 + u_2x_2 + \sum_{i\in I_1} u_ix_i = (0,0) \;\;\mbox{in $G$},$$
which gives an $A$-weighted zero-sum subsequence of $S$ of length $\ell$.

Now let us  suppose that $b_i \equiv 0 \pmod p$ for every index $i\in[3, \ell+ \beta-1]$, and write  $b_i = c_ip$, for all $i\in[3, \ell+ \beta-1]$. The sequence
$$S' = (1,0)(1,0)(a_3,c_3)(a_4,c_4)\cdots (a_{\ell+\beta-1},c_{\ell+\beta-1})$$
is now over $\Z_{p^\alpha}\oplus\Z_{p^{\beta-1}}$ and has length $\ell + \beta-1$. By the induction hypothesis, there is an $A$-weighted zero-sum subsequence of $S'$ of length $\ell$. Multiplying the second coordinate of terms of this subsequence by $p$, we obtain the required result for $S$.
\end{proof}
\begin{lemma} \label{fi10}
Let   $G = \Z_{p^{\alpha}}\oplus \Z_{p^\beta}$,  $\alpha \ge \beta$ and $\ell \ge 4$.  Let $S = x_1x_2\cdots x_{\ell + \beta}$ be a sequence over $G$, with $x_i = (a_i, b_i)$, for  $i \in [1, \ell + \beta]$. If for some $j\in [1, \ell+\beta]$ we have $\gcd(a_j,p)=1$ and there   are $i_1$, $i_2$, $\ldots$ , $i_k \in [, \ell + \beta]\setminus\{j\}$ with $k \leq \ell-3$ such that
$$u_{i_1}x_{i_1} + u_{i_2}x_{i_2} + \cdots + u_{i_k}x_{i_k} = x_j,$$
for some choice of $u, u_{i_1}, u_{i_2}, \ldots, u_{i_k} \in A$, then $S$ has an $A$-weighted zero-sum subsequence of length $\ell$.
\end{lemma}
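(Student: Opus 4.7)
The strategy parallels the proof of Lemma \ref{fi1}: first normalize the sequence via Lemmas \ref{ob1} and \ref{h4}, then induct on $\beta$, splitting the inductive step by the distribution of indices whose second coordinate is coprime to $p$.

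I first replace each $x_{i_s}$ by $u_{i_s}x_{i_s}$ (permitted by Lemma \ref{ob1}) and apply the automorphism from Lemma \ref{h4} to $x_j$. These operations preserve the existence of an $A$-weighted zero-sum subsequence of length $\ell$, so I may assume $x_j=(1,0)$ and $x_{i_1}+x_{i_2}+\cdots+x_{i_k}=(1,0)$; equivalently $\sum_s a_{i_s}\equiv 1\pmod{p^{\alpha}}$ and $\sum_s b_{i_s}\equiv 0\pmod{p^{\beta}}$. The proof then proceeds by induction on $\beta$, the base cases $\beta=1,2$ handled analogously to Lemma \ref{fi1} by the techniques of Chintamani and Paul \cite{CHI,CH2}.

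For the inductive step $\beta\geq 3$, I distinguish two cases. In the \emph{dense} case there exist two distinct indices $r_1,r_2\in[1,\ell+\beta]\setminus\{j\}$ with $\gcd(b_{r_1}b_{r_2},p)=1$. Choose $I\subset[1,\ell+\beta]\setminus\{j\}$ of cardinality $\ell-1$ containing $\{r_1,r_2\}$. Lemma \ref{luca} applied to the second coordinates of the elements indexed by $I$ yields weights $v_r\in A$ with $\sum_{r\in I}v_rb_r\equiv 0\pmod{p^{\beta}}$; a counting argument in the spirit of Lemmas \ref{h1} and \ref{h2}, exploiting the two-parameter freedom in $v_{r_1},v_{r_2}$, then refines the choice so that additionally $\gcd(\sum_{r\in I}v_ra_r,p)=1$. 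Setting $c\equiv-\sum_{r\in I}v_ra_r\pmod{p^{\alpha}}$---which lies in $A$---the elements $\{x_j\}\cup\{x_r\}_{r\in I}$ with weights $c$ and $(v_r)_{r\in I}$ form the desired $A$-weighted zero-sum of length $\ell$.

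In the \emph{sparse} case, at most one index $r\ne j$ satisfies $\gcd(b_r,p)=1$; such an $r$ cannot lie in $\{i_1,\ldots,i_k\}$, for otherwise $\sum_s b_{i_s}$ would consist of one summand coprime to $p$ plus terms divisible by $p$, contradicting $\sum_s b_{i_s}\equiv 0\pmod p$. Removing that exception if it exists, or else any element of $[1,\ell+\beta]\setminus\{j,i_1,\ldots,i_k\}$ (available because $k\leq\ell-3$), produces $\widetilde S$ of length $\ell+(\beta-1)$ with every second coordinate divisible by $p$. Dividing by $p$ in the second coordinate gives a sequence over $\Z_{p^{\alpha}}\oplus\Z_{p^{\beta-1}}$ on which the lemma's hypotheses still hold, so the inductive hypothesis produces an $A$-weighted zero-sum of length $\ell$, which lifts back to $S$. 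The main obstacle is the dense case: realizing a precise congruence modulo $p^{\beta}$ in the second coordinate while simultaneously avoiding a bad congruence modulo $p$ in the first coordinate does not follow from Lemma \ref{luca} alone, and requires the counting argument to verify that the solution set does not collapse into the bad locus.
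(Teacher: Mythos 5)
Your proposal has a genuine gap, and you have in fact located it yourself: the ``dense'' case is asserted, not proved. The difficulty is not merely technical. You need weights $v_r\in A$ with $\sum_{r\in I}v_rb_r\equiv 0\pmod{p^{\beta}}$ \emph{and} $\sum_{r\in I}v_ra_r\not\equiv 0\pmod p$, but these two conditions can be simultaneously unsatisfiable for the $I$ you pick. For instance, if $x_{r_1}=x_{r_2}=(1,1)$ and every other $x_r$ with $r\in I$ is $(0,0)$, the first condition forces $v_{r_1}+v_{r_2}\equiv 0\pmod{p^{\beta}}$, which forces $\sum_{r\in I}v_ra_r=v_{r_1}+v_{r_2}\equiv 0\pmod p$; no ``two-parameter freedom'' in $v_{r_1},v_{r_2}$ can rescue this, because the bad locus mod $p$ in the first coordinate is exactly the image of the solution set mod $p^{\beta}$ in the second. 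Note also that your dense case never invokes the hypothesis $u_{i_1}x_{i_1}+\cdots+u_{i_k}x_{i_k}=x_j$ at all --- it uses only $\gcd(a_j,p)=1$ and two unit second coordinates --- so if it worked it would prove a strictly stronger statement, which is a warning sign; repairing it would require using that hypothesis to steer the choice of $I$ and of the weights, and that is not sketched. The base cases $\beta\in\{1,2\}$ are likewise only deferred.

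The paper's own proof is a short reduction that avoids all of this. Normalize so that $j=1$ and $u_2x_2+\cdots+u_{k+1}x_{k+1}=x_1$, and form the sequence
$$S'=x_1\,(u_2x_2+\cdots+u_{k+1}x_{k+1})\,x_{k+2}\cdots x_{\ell+\beta}$$
of length $(\ell-k+1)+\beta$, whose first two terms are equal with first coordinate coprime to $p$. Since $k\le\ell-3$ gives $\ell-k+1\ge 4$, Lemma \ref{fi1} applies with $\ell'=\ell-k+1$ and produces an $A$-weighted zero-sum subsequence of length $\ell'$ containing the bundled term; replacing that single term by its $k$ constituents $u_2x_2,\ldots,u_{k+1}x_{k+1}$ yields a zero-sum subsequence of $S$ of length $\ell'-1+k=\ell$. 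In short: the intended argument bundles the combination into a single repeated element and quotes Lemma \ref{fi1}, rather than re-running the induction on $\beta$ from scratch. Your sparse case and the removal of the at-most-one exceptional index are fine as far as they go, but without a correct dense case the proof does not close.
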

\begin{proof}  Let us assume $x_{1}=(a_{1},b_{1})$ with $\gcd(a_1,p)=1$ and
$$u_{2}x_{2} + \cdots + u_{k+1}x_{k+1} = x_1,$$
for some choice of $u_{2},\ldots, u_{k+1}\in A$. Consider the sequence
$$S' = x_1(u_{2}x_{2} + \cdots + u_{k+1}x_{k+1})x_{k+2}\cdots x_{\ell + \beta}$$
of length $\ell + \beta - ( k-1)$. This sequence has  the first two terms equal to $x_{1}$, and since $\ell  - ( k-1)\geq 4$, we can apply Lemma \ref{fi1} and find an $A$-weighted zero-sum subsequence $S^{''}$ of $S' $,
$$
S^{''} = x_1(u_{2}x_{2} + \cdots + u_{k+1}x_{k+1})x_{i_{3}}\cdots x_{i_{m}}
$$
of length $m=\ell -(k-1)$. Now it is easy to see that 
$$
S^{*} = x_1x_{2} \cdots x_{k+1}x_{i_{3}}\cdots x_{i_{m}}
$$
is an  $A$-weighted zero-sum subsequence of length $\ell$ of $S$.
\end{proof}

\section{Main Result}

Let us start this section recalling that 
$$
A=\left\{x\in\mathbb{N}\; : \; 1 \le a \le p^{\alpha} \; \mbox{ and }\; \gcd(a, p) = 1\right\}.
$$ 
Chintamani and Paul in \cite{CHI,CH2} proved that, for any $\alpha \in \N$,
\begin{equation}\label{CP}
s_A(\Z_{p^{\alpha}}\oplus\Z_{p}) = p^{\alpha} + \alpha + 1\; \mbox{ and }\; s_A(\Z_{p^{\alpha}}\oplus\Z_{p^{2}}) = p^{\max(\alpha,2)} + \alpha + 2.
\end{equation}
Our goal is to generalize these results and prove that, for any $\alpha, \,\beta \in \N$, we have
$$
s_A(\Z_{p^{\alpha}}\oplus\Z_{p^\beta}) = p^{\max(\alpha,\beta)} + \alpha + \beta.
$$

We begin this proof by considering  $S = x_1x_2 \cdots x_{m}$,
 a sequence over $G= \Z_{p^{\alpha}}\oplus\Z_{p^\beta}$, of length $m=p^{\max(\alpha,\beta)} + \alpha + \beta$. Let us write $x_i = (a_i, b_i) \in G$, for all $i\in [1,m]$.
 
 We are going to proceed by a simultaneous induction over $\alpha$ and $\beta$. The particular cases are given in \eqref{CP}, so we will assume that $s_A(\Z_{p^{\delta}}\oplus\Z_{p^\gamma}) \leq p^{\max(\delta,\gamma)} + \delta + \gamma$, for any $\delta + \gamma < \alpha + \beta$. In particular our induction hypothesis tells us that, if $\ell\geq p^{\max(\delta,\gamma)} + \delta + \gamma$, then any sequence of length $\ell$ over $\Z_{p^{\delta}}\oplus\Z_{p^\gamma}$ has an A-weighted zero-sum  subsequence of length $p^{\max(\delta,\gamma)}$ (the exponent of this group).

With the considerations above, we will prove some lemmas.
 \begin{lemma}\label{seq}
 If there is an $I\subset[1,m]$,  such that $a_{i}\equiv 0\pmod{p}$, for all $i\in I$ and $|I|\geq p^{\max(\alpha-1,\beta)} + \alpha + \beta-1$,  then the sequence $S$ has an A-weighted zero-sum subsequence of length $p^{\max(\alpha -1, \beta) }$ in $G$. Similarly, if  there is an $I\subset[1,m]$,  such that $b_{j}\equiv 0\pmod{p}$, for all $j\in I$,  and  $|I|\geq p^{\max(\alpha,\beta-1)} + \alpha + \beta-1$,  then the sequence $S$ has an A-weighted zero-sum subsequence of length $p^{\max(\alpha, \beta-1)}$ in $G$.
 \end{lemma}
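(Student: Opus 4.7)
The plan is a direct reduction to the induction hypothesis via the substitution $a_i = p c_i$. For the first claim, since every $a_i$ with $i \in I$ is divisible by $p$, I would write $a_i = p c_i$ with $c_i \in \Z_{p^{\alpha-1}}$ and form the auxiliary sequence $T = \prod_{i\in I}(c_i, b_i)$ over the strictly smaller group $H = \Z_{p^{\alpha-1}} \oplus \Z_{p^\beta}$. Because $(\alpha-1)+\beta < \alpha + \beta$ and the length condition $|I|\ge p^{\max(\alpha-1,\beta)}+(\alpha-1)+\beta$ is exactly the threshold supplied by the induction hypothesis, $T$ possesses an $A_H$-weighted zero-sum subsequence of length $p^{\max(\alpha-1,\beta)}$, where $A_H$ is the analogue of $A$ for the exponent of $H$.

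The next step is to lift this zero-sum back to $G$. Since $\alpha\ge\beta$ forces $\max(\alpha-1,\beta)\le\alpha$, the inclusion $A_H\subseteq A$ is immediate, so every weight obtained from the induction is legal in $A$. A relation $\sum_{i\in J}u_i(c_i,b_i)=(0,0)$ in $H$ translates to $\sum_{i\in J} u_i c_i \equiv 0 \pmod{p^{\alpha-1}}$ and $\sum_{i\in J} u_i b_i \equiv 0 \pmod{p^\beta}$. Multiplying the first congruence by $p$ yields $\sum_{i\in J}u_i a_i\equiv 0\pmod{p^\alpha}$, so the same indices and weights furnish an $A$-weighted zero-sum subsequence of $S$ of length $p^{\max(\alpha-1,\beta)}$ in $G$.

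The second claim is proved by the symmetric reduction, passing to $H' = \Z_{p^\alpha}\oplus\Z_{p^{\beta-1}}$ via $b_j = p d_j$. The assumption $\alpha\ge\beta$ guarantees that the exponent of $H'$ is still $p^\alpha$, so $A_{H'}=A$ and the weight lifting is trivial. The induction hypothesis applies since $\alpha+(\beta-1)<\alpha+\beta$ and $|I|\ge p^{\max(\alpha,\beta-1)}+\alpha+(\beta-1)$ matches the required threshold exactly; the resulting zero-sum has the length $p^{\max(\alpha,\beta-1)}$ demanded in the statement.

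The argument is essentially bookkeeping; the only points that require a bit of care are the numerology (checking that the hypothesis $|I|\ge p^{\max(\alpha-1,\beta)}+\alpha+\beta-1$ is precisely what the induction on $\alpha+\beta$ demands), the compatibility $A_H\subseteq A$ of the weight sets, and the fact that multiplication by $p$ on the first coordinate turns congruences modulo $p^{\alpha-1}$ into congruences modulo $p^{\alpha}$. All three are immediate consequences of $\alpha\ge\beta$ together with the form of $A$, so no substantive obstacle is expected.
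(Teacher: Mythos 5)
Your proof is correct and is essentially identical to the paper's: both reduce to the smaller group by factoring $p$ out of the coordinate divisible by $p$, invoke the induction hypothesis on $\alpha+\beta$ (whose length threshold matches the lemma's hypothesis exactly), and lift the zero-sum back by multiplying that coordinate by $p$. Your explicit check that the weight set of the smaller group embeds in $A$ is a small point the paper leaves implicit, but it changes nothing substantive.
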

\begin{proof} It is sufficient to considered one of the cases. Let $k=\max(\alpha, \beta-1)$ and, with no loss in generality, suppose $b_i=b_i^{'}p$ for $i\in [1,r]$, with $r=|I|$. Thus, the sequence
$$S' = (a_1, b_1') (a_2, b'_2) (a_3, b_3') \cdots (a_{r}, b'_{r})$$
is over $\Z_{p^{\alpha}}\oplus\Z_{p^{\beta-1}}$.  
Since $r\geq p^{\max(\alpha,\beta-1)} + \alpha + \beta-1$, 
this sequence $S'$ must contain an $A$-weighted zero-sum subsequence of length $p^{k}$, that is, there exist $J\subset [1,r]$, with $|J|=p^{k}$, and $u_{1},\ldots, u_{p^{k}}\in A$ such that
$$
\sum_{i\in J} u_{i}(a_{i}, b_{i}') = 0 \;\;\mbox{in} \;\;\Z_{p^{\alpha}}\oplus\Z_{p^{\beta-1}}.
$$
Therefore 
$$\sum_{i\in J} u_{i}(a_{i}, b_{i}) = 0 \;\;\mbox{in} \;\;\Z_{p^{\alpha}}\oplus\Z_{p^{\beta}},$$
as desired.
\end{proof}
\begin{lemma}\label{seq2} Suppose $\alpha >\beta$. If there is an $I\subset[1,m]$, $|I|=m-1$,  such that $a_{i}\equiv 0\pmod{p}$, for all $i\in I$, then the sequence $S$ has an A-weighted zero-sum subsequence of length $p^{\alpha}$ in $G$.
\end{lemma}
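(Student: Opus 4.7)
The plan is to apply Lemma \ref{seq} iteratively, extracting $p$ disjoint $A$-weighted zero-sum subsequences of length $p^{\alpha-1}$ each whose indices lie in $I$, and then concatenate them into a single $A$-weighted zero-sum subsequence of length $p \cdot p^{\alpha-1} = p^{\alpha}$.

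Since $\alpha > \beta$ forces $\alpha - 1 \ge \beta$, so $\max(\alpha - 1, \beta) = \alpha - 1$. Reading the proof of Lemma \ref{seq} carefully, its conclusion only uses the fact that the set of indices in $I$ is large enough to trigger the induction hypothesis on $\Z_{p^{\alpha-1}} \oplus \Z_{p^\beta}$ after dividing the first coordinate by $p$; in particular, it applies verbatim to any subset $J \subset I$ with $|J| \ge p^{\alpha-1} + \alpha + \beta - 1$, producing an $A$-weighted zero-sum subsequence of $S$ of length $p^{\alpha-1}$ whose support lies in $J$. This is the only ingredient we need.

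Now initialize $I_0 = I$, so $|I_0| = m - 1 = p^{\alpha} + \alpha + \beta - 1$. Having already extracted $k$ pairwise disjoint $A$-weighted zero-sum subsequences $T_1, \ldots, T_k$ of length $p^{\alpha-1}$ with supports in $I$, the remaining index set $I_k$ has cardinality
\[
|I_k| \;=\; p^{\alpha} + \alpha + \beta - 1 - k p^{\alpha-1}.
\]
The inequality $|I_k| \ge p^{\alpha-1} + \alpha + \beta - 1$ reduces to $k \le p - 1$, so the extraction step succeeds for $k = 0, 1, \ldots, p-1$, yielding $T_1, \ldots, T_p$. Their concatenation $T_1 T_2 \cdots T_p$ is a subsequence of $S$ of length $p^{\alpha}$, and since each $T_j$ is itself an $A$-weighted zero-sum in $G$, the union (with the same weights on each piece) is again an $A$-weighted zero-sum in $G$.

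There is no real obstacle here: the lemma is essentially a bookkeeping consequence of Lemma \ref{seq} together with the identity $p^{\alpha} = p \cdot p^{\alpha-1}$. The only point worth verifying is that Lemma \ref{seq} is robust under restricting to subsets of $I$, which is transparent from its proof.
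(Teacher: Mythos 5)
Your proof is correct and follows essentially the same route as the paper's: both repeatedly invoke Lemma \ref{seq} to peel off $p$ pairwise disjoint $A$-weighted zero-sum subsequences of length $p^{\alpha-1}$ supported in $I$, verifying via the count $m-1-kp^{\alpha-1}\ge p^{\alpha-1}+\alpha+\beta-1$ for $k\le p-1$ that the extraction can be done $p$ times, and then concatenate them. Your write-up is, if anything, slightly more explicit about why Lemma \ref{seq} still applies to the shrinking index sets.
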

\begin{proof}  By the induction hypothesis, we are assuming that (for $\alpha > \beta$)
$$
s_{A}(\Z_{p^{\alpha-1}}\oplus\Z_{p^{\beta}}) \leq p^{\alpha-1} + \alpha + \beta -1.
$$
 On the other hand
\begin{equation}\label{m-1}
m-1= p^{\alpha} + \alpha + \beta -1 = p.p^{\alpha-1} + \alpha + \beta -1,
\end{equation}
hence we can apply Lemma \ref{seq} to obtain an A-weighted zero-sum subsequence $T_{1}$ of length $p^{\alpha-1}$. If we exclude  this  subsequence $T_{1}$ of the sequence $S$, we will have a subsequence of length $m - p^{\alpha -1}$ with  still  enough terms  (see \eqref{m-1}) to apply  again Lemma \ref{seq} and find another  disjoint A-weighted zero-sum subsequence $T_{2}$ of length $p^{\alpha -1}$. Observe that  this process can be repeated $p$ times (see \eqref{seq2}), hence we have $p$ disjoint A-weighted zero-sum subsequences $T_{1},T_{2},\ldots, T_{p}$ of $S$, each of length $p^{\alpha -1}$. Therefore
$$
T_{1}T_{2}\cdots T_{p}
$$
is an A-weighted zero-sum subsequence of $S$ of length $p^{\alpha}$ in $G$.
\end{proof}
\begin{lemma}
If $\alpha = \beta$ then the sequence $S$ has an A-weighted zero-sum subsequence of length $p^{\beta}$ in $G$.
\end{lemma}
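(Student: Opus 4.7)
The plan is to imitate Lemma \ref{seq2}, exploiting that for $\alpha=\beta$ we have $\max(\alpha-1,\beta)=\max(\alpha,\beta-1)=\beta$, so a single application of Lemma \ref{seq} already yields a zero-sum of the target length $p^{\beta}$ in the extreme cases. Concretely, if at least $m-1=p^{\beta}+2\beta-1$ of the $a_{i}$ satisfy $p\mid a_{i}$, Lemma \ref{seq} directly gives an $A$-weighted zero-sum subsequence of length $p^{\beta}$; the analogous statement handles the $b_{i}$. So we may assume that at least two indices $i$ have $\gcd(a_{i},p)=1$ and at least two have $\gcd(b_{i},p)=1$. By Lemmas \ref{ob1} and \ref{h4}, after a suitable automorphism we may further put $x_{1}=(1,0)$.

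Next, consider the subgroup $H=0\oplus p^{\beta-1}\Z_{p^{\beta}}$ of order $p$ and the quotient $\bar{G}=G/H\cong\Z_{p^{\beta}}\oplus\Z_{p^{\beta-1}}$. Since $\beta+(\beta-1)<\alpha+\beta$, the induction hypothesis gives $s_{A}(\bar{G})\leq p^{\beta}+2\beta-1<m$. Hence there exists a subsequence $T\subset S$ of length $p^{\beta}$ with weights $u_{i}\in A$ for $i\in T$ satisfying $\sum_{i\in T}u_{i}x_{i}=(0,c\,p^{\beta-1})\in H$ for some $c\in\{0,1,\ldots,p-1\}$; if $c=0$ we are done.

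If $c\neq 0$, the strategy is to adjust the weights as $u_{1}\mapsto u_{1}+s\,p^{\beta-1}$ and $u_{j}\mapsto u_{j}+t\,p^{\beta-1}$ for some $x_{j}\in T$ with $\gcd(b_{j},p)=1$, subject to $s+t\,a_{j}\equiv 0\pmod{p}$. For $\beta\geq 2$ both shifts are multiples of $p$, so the new weights remain in $A$; the first coordinate of the $A$-weighted sum stays $0\pmod{p^{\beta}}$, while the $H$-component shifts by $t\,b_{j}\pmod{p}$, so the choice $t\equiv -c/b_{j}\pmod{p}$ nullifies $c$ and produces the desired zero-sum. The case $\beta=1$ reduces to the base case $s_{A}(\Z_{p}^{2})=p+2$ of Adhikari \emph{et al.}

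The hard part will be the structural claim that $T$ can be arranged to contain both $x_{1}$ and some $x_{j}$ with $\gcd(b_{j},p)=1$. When the bare extraction of the previous paragraph does not meet this condition, I expect to close the gap either by Lemma \ref{fi10}---realizing $x_{1}$ as a short $A$-weighted combination of other elements whose $b$-coordinates are $p$-units---or by a pigeonhole and swap argument on the fibers of the reduction $S\to\bar{G}$, exchanging elements of $T$ with suitable elements of $S\setminus T$ lying in the same fiber so as to acquire a unit second coordinate inside the block.
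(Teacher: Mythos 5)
Your proposal diverges from the paper's argument (which normalizes two terms to $(1,0)$ and $(0,1)$ via Lemma~\ref{seq} and Lemma~\ref{h4}, extracts an $a_i$ and a $b_j$ coprime to $p$ among the remaining terms by a third application of Lemma~\ref{seq}, and then finishes with the counting Lemma~\ref{h2}), and it contains a genuine gap at exactly the point you flag as ``the hard part.'' The weight-perturbation step only works if the residual class $(0,-c)\in\Z_p^2$ lies in the $\Z_p$-span of the reductions mod $p$ of the terms of $T$; concretely you need some $x_j\in T$ with $\gcd(b_j,p)=1$ (together with a second suitable term such as $x_1$). Nothing in the extraction of $T$ from the quotient $\bar G=G/H$ guarantees this: the induction hypothesis hands you \emph{some} zero-sum block of length $p^{\beta}$, and with $|S|=s_A(\bar G)+1$ you have essentially no freedom to prescribe which terms it contains. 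If every term of $T$ has $b_j\equiv 0\pmod p$, the perturbation cannot reach $(0,-c)$ for $c\neq 0$.

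Neither of your proposed repairs closes this. The fiber-swap idea provably cannot work: two elements in the same fiber of $G\to G/H$ differ by an element of $H=0\oplus p^{\beta-1}\Z_{p^{\beta}}$, hence have equal second coordinates mod $p^{\beta-1}$, and therefore mod $p$ (since $\beta\ge 2$); so no sequence of within-fiber exchanges can introduce a unit second coordinate into $T$. As for Lemma~\ref{fi10}, it requires exhibiting a term $x_j$ with $\gcd(a_j,p)=1$ as a short $A$-weighted combination of at most $\ell-3$ other terms, and the quotient construction does not produce such a representation. The paper avoids this difficulty entirely by never fixing a block in advance: Lemma~\ref{h2} is a counting argument over all weight choices on a flexible index set, which is precisely what absorbs the case analysis your perturbation cannot handle. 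To salvage your route you would need an additional structural dichotomy (e.g., if too few terms have unit second coordinate, reduce to Lemma~\ref{seq}; otherwise force one into $T$), and that dichotomy is essentially the paper's proof.
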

\begin{proof}   Since $\alpha = \beta$ then $\max(\alpha, \beta-1)= \max(\alpha-1, \beta)= \beta$. According to Lemma \ref{seq}, we may assume  that $\gcd(b_{1},p)=1$, and applying Lemma \ref{h4} 
(for $\alpha = \beta$), we may write, abusing notation,
$$
S = (0,1)(a_{2},b_{2})(a_{3},b_{3}) \cdots (a_{m},b_{m}).
$$
Again, according to Lemma \ref{seq} we assume that $\gcd(a_{2},p)=1$ and by Lemma \ref{h4}, we may rewrite $S$ as (also abusing notation)
$$S = (1, 0) (0, 1) (a_3, b_3) \cdots (a_{m}, b_{m}).$$
Now we can apply  Lemma \ref{seq} to the two  sequences $(0, 1)(a_3, b_3) \cdots (a_{m}, b_{m})$ and $(1, 0) (a_3, b_3) \cdots (a_{m}, b_{m}),$ and   guarantee that there must be an $a_{i}$ and an $b_{j}$, $i,j\in [3,m]$ (including the case where $i = j$), such that $\gcd(a_ib_j, p) = 1$.
We can now apply Lemma \ref{h2}, to conclude this proof.
\end{proof}
\begin{lemma}\label{a>b}
If $\alpha > \beta$ then the sequence $S$ has an A-weighted zero-sum subsequence of length $p^{\alpha}$ in $G$.
\end{lemma}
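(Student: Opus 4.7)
The plan is to follow the same high-level strategy as the preceding $\alpha = \beta$ lemma, adapted to the asymmetric situation: first reduce to the balanced configuration where both coordinates have at least two entries coprime to $p$, then normalize one such element, then split on whether a ``Type IV'' element (with $p \mid a$ and $\gcd(b, p) = 1$) is present. The main obstacle is the sub-case where no Type IV element exists.

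For the reduction, observe that if at least $m - 1$ of the $b_i$ satisfy $p \mid b_i$, then Lemma~\ref{seq} applied to the $b$-coordinates (noting $\max(\alpha, \beta-1) = \alpha$ since $\alpha > \beta$) produces an $A$-weighted zero-sum of length $p^\alpha$ directly. Symmetrically, if at least $m - 1$ of the $a_i$ satisfy $p \mid a_i$, Lemma~\ref{seq2} produces the required zero-sum. In the remaining case, $\{i : \gcd(a_i, p) = 1\}$ and $\{i : \gcd(b_i, p) = 1\}$ each have cardinality at least $2$. Picking some $i_0$ with $\gcd(a_{i_0}, p) = 1$ and invoking Lemma~\ref{h4} together with Lemma~\ref{ob1}, I may further assume $x_{i_0} = (1, 0)$.

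I now split on the existence of an element $x_j$ with $p \mid a_j$ and $\gcd(b_j, p) = 1$. If such $x_j$ exists, I form a length-$(p^\alpha + 1)$ subsequence of $S$ containing $x_{i_0}$ (as the ``$(1,0)$''-term), $x_j$ (as the ``$(a_2, b_2)$''-term), and $p^\alpha - 1$ further terms chosen to include at least one $a$-coprime index and at least one $b$-coprime index. This selection is always possible because after removing $x_{i_0}, x_j$ we have $\alpha + \beta - 1$ indices we may freely discard, while both ``coprime'' sets retain at least one representative. Lemma~\ref{h2} (valid since $p^\alpha \geq 9$) then yields an $A$-weighted zero-sum of length $p^\alpha$.

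The delicate sub-case is when no such $x_j$ exists, i.e., every element with $\gcd(b_j, p) = 1$ also has $\gcd(a_j, p) = 1$. The idea is to \emph{synthesize} the missing Type IV element from two elements $x_i, x_{i'}$ with both coordinates coprime to $p$: the combination $y = a_{i'} x_i + (p^\alpha - a_i) x_{i'}$ has first coordinate zero and second coordinate congruent to $a_{i'} b_i - a_i b_{i'} \pmod{p^\beta}$, which is coprime to $p$ precisely when $x_i, x_{i'}$ are non-parallel modulo $p$. Provided such a non-parallel pair exists, one combines this synthetic representation with Lemma~\ref{fi10} (with $\ell = p^\alpha$ and $k = 2$) to extract a length-$p^\alpha$ zero-sum. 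The hardest remaining scenario is when all $\gcd(b, p) = 1$ elements are pairwise parallel modulo $p$: here the rigid collinear structure forces a descent to a smaller group and an appeal to the induction hypothesis on $\Z_{p^{\alpha-1}} \oplus \Z_{p^\beta}$, and this is the principal obstacle to making the argument fully routine.
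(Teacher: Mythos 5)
Your opening reduction (both coprime index sets have at least two elements, else Lemmas \ref{seq}/\ref{seq2} finish) and your first case (a genuine ``Type IV'' term $(a_j,b_j)$ with $p\mid a_j$, $\gcd(b_j,p)=1$ feeds into Lemma \ref{h2}) match the paper. But the ``no Type IV element'' branch has a genuine gap. Your synthetic element $y=a_{i'}x_i+(p^\alpha-a_i)x_{i'}=(0,c)$ cannot be combined with Lemma \ref{fi10}: that lemma requires the weighted combination to equal a \emph{term $x_j$ of the sequence} whose first coordinate satisfies $\gcd(a_j,p)=1$ (its proof works by collapsing the combination into a second copy of $x_j$ and invoking Lemma \ref{fi1}, which needs two equal terms with first coordinate coprime to $p$). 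A combination whose first coordinate is $0\pmod{p^\alpha}$ can never satisfy this hypothesis, and trying instead to use $(0,c)$ as the Type IV term of Lemma \ref{h2} runs into length bookkeeping (expanding $y$ back into two terms overshoots $p^\alpha$) and fails outright when only two both-coprime terms exist. The paper avoids this entirely: after normalizing all terms with coprime first coordinate to the form $(1,b_i)$, it builds combinations that equal a unit multiple of the term $(1,0)$ itself --- namely $(a_ib_3-b_i)x_2-a_ib_2x_3+b_2x_i=(a_ib_3-b_i)(1,0)$ when some $x_i$ has $\gcd(a_i,p)=1$, $p\mid b_i$ (Lemma \ref{fi10} with $k=3$), and $b_ix_j-b_jx_i=(b_i-b_j)(1,0)$ when two of the $b_i$ differ mod $p$ (Lemma \ref{fi10} with $k=2$) --- so the target of the combination genuinely is a sequence term with coprime first coordinate.

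Your final ``all parallel'' case is also not closed, and the proposed descent to $\Z_{p^{\alpha-1}}\oplus\Z_{p^\beta}$ points in the wrong direction: the collinearity $b_j\equiv b_2\pmod p$ lives in the second coordinate. The paper's resolution is the shear automorphism $\Theta(x,y)=(x,\,y-b_2\mu(x))$, after which every term except the first has second coordinate divisible by $p$, so Lemma \ref{seq} (applied to the $m-1$ remaining terms, which is a descent to $\Z_{p^\alpha}\oplus\Z_{p^{\beta-1}}$ inside that lemma) produces the zero-sum of length $p^{\max(\alpha,\beta-1)}=p^\alpha$. Note that this step only works because the configuration with $\gcd(a_i,p)=1$ and $p\mid b_i$ for some $i\geq 4$ has already been eliminated --- such a term would map under $\Theta$ to one with second coordinate $\not\equiv 0\pmod p$ --- and your case split never isolates or disposes of that configuration. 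So beyond the first case, the argument needs to be substantially reorganized along the paper's lines.
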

\begin{proof} According to Lemma \ref{seq2}, we  may assume that $\gcd(a_{1}a_{2},p)=1$. By Lemma \ref{h4} and abusing notation we may write $S$ as
$$
S = (1, 0) (a_2, b_2) (a_3, b_3) \cdots (a_{m}, b_{m}),
$$
and still have $\gcd(a_2,p)=1$.  From Lemma \ref{seq} we may assume that the $\gcd(b_{3}b_{t},p)=1$, for $t\in[2,m]$, $t\neq 3$. 

If  $a_{3}\equiv 0 \pmod{p}$, or if  $t\neq 2$ we have $a_{t}\equiv 0 \pmod{p}$, then we can apply Lemma \ref{h2} to obtain an $A$-weighted zero-sum subsequence of length $p^{\alpha}$, for $\gcd(b_{t},p)=1$. Thus let us assume (see Lemma \ref{ob1}) $S$ to be 
$$
S = (1, 0) (1, b_2) (1, b_3) \cdots (a_{m}, b_{m}),
$$
and $\gcd(b_{2}b_{3},p)=1$.
If for some $i \in[4, m]$  we have  $\gcd(a_i, p) = 1$ and $b_i\equiv 0 \pmod{p}$ then
$$(a_ib_3 - b_i)x_2 - a_ib_2x_3 + b_2x_i = (a_ib_3 - b_i)(1,0).$$
Since $a_ib_3 - b_i \in A$, there exists a $w\in\Z_{p^{\alpha}}$ such that
$(a_ib_3 - b_i)w=1$. Hence we can rewrite the expression above as
$$
x_{2} + u_{3}x_{3} + u_{i}x_{i} = (1,0)
$$
with $u_{3},u_{i}\in A$. Taking $k=3$ and $\ell = p^{\alpha}\geq 9$ , for p is an odd prime and $\alpha \geq 2$, it follows by the Lemma \ref{fi10}  that $S$ has an $A$-weighted zero-sum subsequence of length $p^{\alpha}$.

Hence, for some $t\in[3,m]$,   we may  consider $S$ to be  (by  Lemma \ref{ob1})
\begin{equation}\label{S}
S= (1,0)(1,b_2)(1,b_3)\cdots(1,b_t)(a_{t+1},b_{t+1})\cdots(a_{m},b_{m}), 
\end{equation}
with $\gcd(b_{2}b_{3}\cdots b_{t}, p)=1$ and $a_{i}\equiv b_{i} \equiv 0 \pmod{p}$, for all $i\in [t+1,m]$.

If $b_i\not\equiv b_j \pmod p$, for  $i,j\in\{2,3,\ldots, t\}$, then 
$$
b_ix_j - b_jx_i = (b_i-b_j)(1, 0).
$$ 
Since $b_i-b_j\in A$, we can repeat the arguments above, using Lemma \ref{fi10},
with $\ell = p^{\alpha}$ and $k=2$,  to obtain  an $A$-weighted zero-sum subsequence of length $p^{\alpha}$. So let us assume that $b_{j}=b_{2} + k_{j}p$, for all $j\in [3,m]$.

By the proof of Lemma \ref{h4}, we see that the automorphism $\Theta(x,y)= (x, y - b_2\mu(x))$ has the following properties:
$$
\Theta(1,b_{2})=(1,0),\; \Theta(1, 0) = (1,-b_{2}), \; \Theta(1, b_{2}+ np) = (1,np),\; \mbox {and} \;\Theta(a, bp) = (a, b'p).
$$

Thus  (see \eqref{S}),
$$\Theta(S)= (1,-b_2)(1,0)(1,k_3p)\cdots(1,k_tp)(a_{t+1},pb'_{t+1})\cdots(a_{m},pb'_{m}),$$
that is,  $\Theta(S)= (c_{1},d_{1})\cdots (c_m ,d_{m})$ and $d_{j}\equiv 0 \pmod{p}$ for all $j\in [2,m]$. As $m-1\geq p^{\alpha-1} + \alpha + \beta-1$, we can apply Lemma \ref{seq} to obtain an $A$-weighted zero-sum subsequence of $\Theta(S)$ of length $p^{\alpha}$, for we are assuming $\alpha > \beta.$ Now the result follows from Lemma \ref{ob1}.
\end{proof}

\begin{theorem}\label{princ}
Let $p$ be an odd prime and $G = \Z_{p^{\alpha}}\oplus\Z_{p^\beta}$, with  $\alpha \ge \beta$. Then $s_A(G) = p^{\alpha} + \alpha + \beta$.
\end{theorem}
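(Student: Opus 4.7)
The plan is to prove both inequalities in $s_A(G) = p^\alpha + \alpha + \beta$. For the upper bound $s_A(G) \le p^\alpha + \alpha + \beta$, essentially all the work has already been done. The setup at the beginning of Section 4 initiates an induction on $\alpha + \beta$, the base cases $\beta \in \{1,2\}$ come from the Chintamani--Paul identities recalled in \eqref{CP}, and the inductive step is dispatched by the two preceding lemmas: the one immediately before Lemma \ref{a>b} handles $\alpha = \beta$, and Lemma \ref{a>b} itself handles $\alpha > \beta$. Thus for any sequence $S$ of length $m = p^\alpha + \alpha + \beta$ over $G$, one of these lemmas produces the required $A$-weighted zero-sum subsequence of length $p^\alpha = \exp(G)$, completing the induction.

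For the reverse inequality I would exhibit an explicit sequence over $G$ of length $p^\alpha + \alpha + \beta - 1$ which has no $A$-weighted zero-sum of length $p^\alpha$. A natural candidate is
$$T = (0,0)^{p^\alpha - 1} \cdot \prod_{i=0}^{\alpha - 1}(p^i, 0) \cdot \prod_{j=0}^{\beta - 1}(0, p^j),$$
which contains only $p^\alpha - 1$ copies of $(0,0)$, so any subsequence of length $p^\alpha$ must include at least one of the $\alpha + \beta$ ``generator'' terms. Since zero terms contribute nothing to any weighted sum, such a weighted sum equals the weighted sum of the selected generators. Its first coordinate has the form $\sum_r u_r p^{i_r}$ with distinct $i_r \in [0, \alpha - 1]$ and $u_r \in A$ coprime to $p$, whose $p$-adic valuation equals $\min_r i_r < \alpha$; an analogous statement holds for the second coordinate. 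Thus the weighted sum cannot vanish in $G$ whenever a generator appears, giving $s_A(G) > p^\alpha + \alpha + \beta - 1$.

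The main obstacle is already behind us: the two case-split lemmas do the combinatorial heavy lifting, and the only remaining task is the short $p$-adic valuation check for the lower bound. The theorem itself is really the statement that these pieces fit together, and I would present it as a brief corollary-style proof.
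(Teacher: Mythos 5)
Your proposal is correct and follows essentially the same route as the paper: the upper bound is assembled from the preceding induction and case-split lemmas, and the lower bound uses exactly the same extremal sequence $(0,0)^{p^\alpha-1}(1,0)(p,0)\cdots(p^{\alpha-1},0)(0,1)(0,p)\cdots(0,p^{\beta-1})$. Your $p$-adic valuation check (that the minimal exponent among the distinct chosen generators forces a nonzero coordinate) is a welcome addition, since the paper asserts the sequence is zero-sum free without writing out this verification.
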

 \begin{proof}
 The sequence of lemmas above proved that $s_A(G) \leq p^{\alpha} + \alpha + \beta$.  We conclude this prove presenting the following sequence over $\Z_{p^\alpha}\oplus\Z_{p^\beta}$ and length $p^\alpha+\alpha+\beta -1$, with no $A$-weighted zero-sum subsequence of length $p^\alpha$.
$$ \underbrace{(0,0) (0,0) \cdots (0,0)}_{p^{\alpha}-1\textrm{ terms}} (1, 0) (p, 0) (p^2, 0) \cdots (p^{\alpha-1}, 0) (0, 1) (0, p) \cdots (0,p^{\beta-1}). $$
This completes the proof of the theorem.
 \end{proof}
 
\section{Extremal A-weighted zero-sum free sequences}

Let $S$ be a sequence over $\Z_{p^{\alpha}}\oplus\Z_{p^\beta}$, and (see Theorem \ref{princ})
$$
m_{0}= p^{\max(\alpha, \beta)} + \alpha + \beta -1= s_{A}(\Z_{p^{\alpha}}\oplus\Z_{p^\beta})-1.
$$
 Let us denote by  $\delta_j(S)$ the number of terms (with multiplicity) of $S$ with order $p^j$, for all $j \in [1, \max(\alpha, \beta)]$. In the last section we have proved that $s_A(G) = p^{\max(\alpha,\beta)} + \alpha + \beta$, and presented a sequence  $S$ with no $A$-weighted zero-sum subsequence of length $p^{\max(\alpha,\beta)}$, with the following characteristics:  
\begin{equation}\label{carac}\begin{array}{l}
(1) \;\; S \;\;\mbox{contains}\;\; p^{\max(\alpha,\beta)}-1\;\;\mbox{ terms equal to } (0,0),  \\
(2) \;\; \delta_j(S)\geq 1\;\;\mbox{ for all } j \in [1, \max(\alpha,\beta)], \;\; \mbox {and }  \\
 (3) \;\;\sum_{j=1}^{\max(\alpha,\beta)} \delta_j(S) = \alpha + \beta.
\end{array}
\end{equation}

Here we want to prove all sequences with no $A$-weighted zero-sum subsequence of length $p^{\max(\alpha,\beta)}$ have the same characteristics.
 
 As before we are going to proceed by a simultaneous induction over $\alpha$ and $\beta$.  Chintamani and Paul proved in \cite{CHI, CH2} that this result is true whenever $\alpha \geq \beta$ and $\beta\in\{1,2\}$. Hence  we will assume that $\min(\alpha, \beta) \geq 3$ and  also assume that any sequence over $ \Z_{p^{\gamma}}\oplus\Z_{p^\delta}$, with $\delta + \gamma < \alpha + \beta$,  and of length $n=s_{A}(\Z_{p^{\gamma}}\oplus\Z_{p^\delta})-1$, with  no $A$-weighted zero-sum subsequence of length $p^{\max(\delta, \gamma )}$ has all the  characteristics described above, that is,
 \begin{equation}\label{carac2}
 \begin{array}{l}
(1) \;\; S \;\;\mbox{contains}\;\; p^{\max(\gamma,\delta)}-1\;\;\mbox{ terms equal to } (0,0),  \\
(2) \;\; \delta_j(S)\geq 1\;\;\mbox{ for all } j \in [1, \max(\gamma,\delta)], \;\; \mbox {and }  \\
 (3) \;\;\sum_{j=1}^{\max(\gamma,\delta)} \delta_j(S) = \gamma + \delta.
\end{array}
 \end{equation} 
 
 Let us start this proof by considering  $S = x_1x_2 \cdots x_{m_{0}}$,
 a sequence over $ \Z_{p^{\alpha}}\oplus\Z_{p^\beta}$. Let us  write $x_i = (a_i, b_i) \in G$, for all $i\in [1,m_{0}]$, and assume  that $S$ has no $A$-weighted zero-sum subsequence of length $p^{\max(\alpha, \beta )}$. 
 \begin{lemma}\label{5.1}
 Let us assume $\alpha \geq \beta$. Then the sequence $S$ must have at least one $x_i$ with $\gcd(b_i , p)=1$, and at least one $x_j$ with $\gcd(a_{j} , p)=1$
 \end{lemma}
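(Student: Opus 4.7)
The plan is to prove each of the two claims by contradiction, via a push-down argument that divides the offending coordinate by $p$ and reinterprets $S$ as a sequence over a smaller group. In the reduced group, Theorem \ref{princ} (equivalently, Lemma \ref{seq}) will produce an $A$-weighted zero-sum subsequence, which lifts back to one of length $p^{\alpha}$ in $S$ itself, contradicting the standing hypothesis on $S$.

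First I would treat the assertion about the second coordinates. Suppose, for contradiction, that $b_i \equiv 0 \pmod{p}$ for every $i \in [1, m_0]$. Then the second case of Lemma \ref{seq} applies with $I = [1, m_0]$, because $|I| = m_0 = p^{\alpha} + \alpha + \beta - 1 = p^{\max(\alpha, \beta-1)} + \alpha + \beta - 1$ (using $\alpha \geq \beta$). Lemma \ref{seq} then yields an $A$-weighted zero-sum subsequence of $S$ of length $p^{\max(\alpha, \beta-1)} = p^{\alpha}$, contradicting the assumption on $S$.

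The second claim, that some $a_j$ is coprime to $p$, is parallel but needs more care when $\alpha > \beta$. Suppose that all $a_j \equiv 0 \pmod{p}$. In the case $\alpha = \beta$ one has $\max(\alpha-1,\beta) = \alpha$, so the first case of Lemma \ref{seq} at once produces an $A$-weighted zero-sum subsequence of length $p^{\alpha}$, finishing that case. The main obstacle is $\alpha > \beta$: then $\max(\alpha-1,\beta) = \alpha - 1$ and a single invocation of Lemma \ref{seq} supplies only a subsequence of length $p^{\alpha-1}$, short of the required $p^{\alpha}$.

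The remedy is precisely the iteration that appears in the proof of Lemma \ref{seq2}. After removing $k$ disjoint $A$-weighted zero-sum subsequences of length $p^{\alpha-1}$, the number of remaining terms of $S$ is $m_0 - k p^{\alpha-1}$, and each of these still has first coordinate divisible by $p$; the inequality $m_0 - k p^{\alpha-1} \geq p^{\alpha-1} + \alpha + \beta - 1$ holds precisely when $k \leq p - 1$. Hence Lemma \ref{seq} can be applied $p$ times in succession, producing disjoint $A$-weighted zero-sum subsequences $T_1, T_2, \ldots, T_p$, each of length $p^{\alpha-1}$. The concatenation $T_1 T_2 \cdots T_p$ is an $A$-weighted zero-sum subsequence of $S$ of length $p^{\alpha}$, a contradiction that completes the proof.
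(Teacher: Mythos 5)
Your proof is correct and follows essentially the same route as the paper: apply the second case of Lemma \ref{seq} when all $b_i$ are divisible by $p$, and handle the first coordinates via Lemma \ref{seq} when $\alpha=\beta$ and via the iterated extraction of $p$ disjoint subsequences of length $p^{\alpha-1}$ when $\alpha>\beta$. The only cosmetic difference is that you re-derive the iteration inline, whereas the paper simply cites Lemma \ref{seq2}, which encapsulates exactly that argument.
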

 \begin{proof}
 If $b_{i}\equiv 0 \pmod{p}$ for $i \in [1,m_{0}]$ then, according to Lemma \ref{seq}, $S$ has an $A$-weighted  zero-sum subsequence of length $p^{\alpha}$, for we are assuming $\alpha \geq \beta$, that is, $m_{0}= p^{\alpha}+\alpha+\beta-1$, an absurd. Now suppose  $a_{i}\equiv 0 \pmod{p}$ for $i \in [1,m_{0}]$. If $\alpha = \beta$ just repeat the arguments above, changing the coordinates. If $\alpha > \beta$  then we can also  repeat the arguments but now using Lemma \ref{seq2} instead.
 \end{proof}
\begin{lemma}\label{b1}
 Let us assume $\alpha \geq \beta$. If $S$ has only one term $x_{j}$ such that
 $\gcd(b_{j},p)=1$ then $S$ has all the properties stated in \eqref{carac}.
\end{lemma}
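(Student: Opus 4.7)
The plan is to reduce the problem to a sequence over $\Z_{p^{\alpha}}\oplus\Z_{p^{\beta-1}}$ by dividing the second coordinate by $p$, and then to apply the induction hypothesis \eqref{carac2}. After relabeling, I assume $x_1$ is the unique term of $S$ with $\gcd(b_1,p)=1$. For each $i\in [2,m_{0}]$, since $p\mid b_i$, write $b_i=pc_i$ with $c_i\in\Z_{p^{\beta-1}}$, and set $x_i'=(a_i,c_i)$. The sequence $S'=x_2'x_3'\cdots x_{m_0}'$ over $\Z_{p^{\alpha}}\oplus\Z_{p^{\beta-1}}$ has length
$$m_{0}-1=p^{\alpha}+\alpha+(\beta-1)-1 = s_{A}(\Z_{p^{\alpha}}\oplus\Z_{p^{\beta-1}})-1$$
by Theorem \ref{princ} (using $\alpha\geq\beta-1$).

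The first step is to check that $S'$ has no $A$-weighted zero-sum subsequence of length $p^{\alpha}$: if $\sum_{i\in J}u_ix_i'=0$ in $\Z_{p^{\alpha}}\oplus\Z_{p^{\beta-1}}$ for some $J\subset[2,m_0]$ with $|J|=p^{\alpha}$ and $u_i\in A$, then $\sum_{i\in J}u_ic_i\equiv 0\pmod{p^{\beta-1}}$, hence $\sum_{i\in J}u_i b_i = p\sum_{i\in J}u_ic_i\equiv 0\pmod{p^{\beta}}$, giving $\sum_{i\in J}u_ix_i=0$ in $G$, contradicting the hypothesis on $S$. Since $\beta-1\geq 2$, the induction hypothesis applies, so $S'$ satisfies the three items of \eqref{carac2} in $\Z_{p^{\alpha}}\oplus\Z_{p^{\beta-1}}$.

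Finally I transfer these properties back to $S$. The key observation is that for every $i\geq 2$,
$$\ord_{G}(x_i) \;=\; \ord_{\Z_{p^{\alpha}}\oplus\Z_{p^{\beta-1}}}(x_i'),$$
because if $b_i=p^{j}w$ with $\gcd(w,p)=1$ and $j\in[1,\beta]$, then both $\ord(b_i)$ in $\Z_{p^{\beta}}$ and $\ord(c_i)$ in $\Z_{p^{\beta-1}}$ equal $p^{\beta-j}$, and $\ord(a_i)$ is unchanged. Consequently: (1) the $p^{\alpha}-1$ zero terms of $S'$ produce exactly $p^{\alpha}-1$ copies of $(0,0)$ in $S$; (2) $\delta_j(S)\geq\delta_j(S')\geq 1$ for all $j\in[1,\alpha]$; and (3) since $\gcd(b_1,p)=1$ forces $\ord_{G}(x_1)=p^{j_0}$ for some $j_0\in[\beta,\alpha]$, the term $x_1$ contributes one extra unit to $\delta_{j_0}(S)$, giving
$$\sum_{j=1}^{\alpha}\delta_j(S)=\sum_{j=1}^{\alpha}\delta_j(S')+1=(\alpha+\beta-1)+1=\alpha+\beta.$$
This yields all three properties of \eqref{carac}. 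The main obstacle is the order-preservation step above; once that routine $p$-adic valuation computation is in hand, the rest is bookkeeping from the induction hypothesis.
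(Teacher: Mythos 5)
Your proposal is correct and follows essentially the same route as the paper: pass to $S'$ over $\Z_{p^{\alpha}}\oplus\Z_{p^{\beta-1}}$ by dividing the second coordinates by $p$, check that $S'$ inherits the absence of $A$-weighted zero-sum subsequences of length $p^{\alpha}$, invoke the induction hypothesis \eqref{carac2}, and transfer the conclusion back via order preservation, with $x_1$ supplying the extra nonzero term. Your write-up is in fact slightly more careful than the paper's on the order-preservation step and on the order of $x_1$ (the paper asserts it is $p^{\alpha}$, whereas only $\ord(x_1)=p^{j_0}$ with $j_0\in[\beta,\alpha]$ is guaranteed, which is all that is needed).
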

\begin{proof}
With no loss in generality, let us assume $b_{j} \equiv 0\pmod{p}$ for all $j\in [2,m_{0}]$, and write $b_i=pb_i'$. Then the sequence
$$S' = (a_2, b_2') (a_3, b_3') (a_4, b_4') \cdots (a_{m_{0}}, b_{m_{0}}')$$
is over $\Z_{p^{\alpha}}\oplus\Z_{p^{\beta-1}}$ and has length $m_{0}-1$. Since $S$ has no  $A$-weighted zero-sum subsequences of length $p^\alpha$,  the same is true for $S'$. By the induction hypothesis, since 
$m_{0}-1 = s_{A}(\Z_{p^{\alpha}}\oplus\Z_{p^{\beta-1}}) -1$ , the sequence $S'$ has the properties stated in \eqref{carac2}, that is, $S'$ contains $p^{\alpha} -1 $ terms equal to $(0,0)$,  $\delta_{j}(S')\geq 1$, for all $j\in [1, \alpha]$ and  $\sum_{j=1}^{\max(\alpha)} \delta_j(S) = \alpha + \beta -1$.  But now observe that if $(a_j, b_j')$ has order $t$ in $\Z_{p^{\alpha}}\oplus\Z_{p^{\beta-1}}$, then $(a_j, b_j)=(a_j, pb_j')$ has also order $t$ in $\Z_{p^{\alpha}}\oplus\Z_{p^{\beta}}$. Now we can return to the sequence $S$, that has also $x_{1}=(a_{1},b_{1})$ with $\gcd(b_{1},p)=1$, that is, $x_{1}$ has order $p^{\alpha}$. Therefore $S$ has all the properties stated in \eqref{carac}.
\end{proof}
\begin{lemma}\label{a=b}
If $\alpha = \beta$ then the sequence $S$ has all the properties stated in \eqref{carac}. 
\end{lemma}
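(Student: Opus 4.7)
The plan is to classify $S$ by how the reductions $\pi(x_i)\in G/pG\cong\mathbb{F}_p^2$ distribute across the lines through the origin, and in each resulting case either produce a forbidden $A$-weighted zero-sum subsequence of length $p^\alpha$ (contradicting the hypothesis on $S$) or apply the already-proved Lemma~\ref{b1} to obtain the characterization~\eqref{carac} directly. For each line $L$ through the origin of $\mathbb{F}_p^2$, write $N_L=|\{i:\pi(x_i)\in L\}|$. Since $\alpha=\beta$, every element of $\mathrm{Aut}(G/pG)$ lifts to an element of $\mathrm{Aut}(G)$, and by Lemma~\ref{ob1} the hypothesis on $S$ together with the conclusion~\eqref{carac} are both automorphism-invariant; we may therefore replace $S$ by $\Theta(S)$ for any $\Theta\in\mathrm{Aut}(G)$ during the argument.

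If some line $L$ has $N_L=m_0$, pick $\Theta$ sending $L$ to the $a$-axis of $\mathbb{F}_p^2$: every term of $\Theta(S)$ then has $b$-coordinate divisible by $p$, and $|I|=m_0=p^\alpha+2\alpha-1$ exactly meets the threshold of Lemma~\ref{seq} for $\alpha=\beta$, producing a forbidden zero-sum subsequence of length $p^\alpha$. If some $L$ has $N_L=m_0-1$, the same choice of $\Theta$ leaves $\Theta(S)$ with exactly one term whose $b$-coordinate is coprime to $p$; Lemma~\ref{b1} applied to $\Theta(S)$ then delivers~\eqref{carac}, which transports back to $S$.

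In the remaining case, $N_L\le m_0-2$ for every line $L$. The reductions now span $\mathbb{F}_p^2$, so I pick $x_i,x_j$ whose reductions are $\mathbb{F}_p$-independent, lift them to a $\mathbb{Z}_{p^\alpha}$-basis of $G$, and take $\Theta\in\mathrm{Aut}(G)$ with $\Theta(x_i)=(1,0)$ and $\Theta(x_j)=(0,1)$. The preimage under $\bar\Theta$ of the $b$-axis is the line through $\pi(x_j)$, and $N_L\le m_0-2$ for that line forces at least two terms of $\Theta(S)$ to have $a$-coordinate coprime to $p$; symmetrically, at least two have $b$-coordinate coprime to $p$. Extracting a subsequence of $\Theta(S)$ of length $p^\alpha+1$ containing $(1,0)$, $(0,1)$, one extra term with coprime $a$-coordinate and one extra term with coprime $b$-coordinate puts us exactly in the setup of Lemma~\ref{h2} (whose hypothesis $m\ge 5$ holds because $p^\alpha\ge 27$ under the standing inductive assumption $\alpha\ge 3$), yielding once more a forbidden zero-sum subsequence of length $p^\alpha$.

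The main obstacle is this last case: one has to guarantee, after moving $x_i,x_j$ to $(1,0),(0,1)$, that there remain extra terms of $\Theta(S)$ with coprime $a$-coordinate and with coprime $b$-coordinate, as required by Lemma~\ref{h2}. The clean way is to do the counting at the reduction level, since the two required conditions translate to ``the line through $\pi(x_j)$ misses at least two reductions'' and ``the line through $\pi(x_i)$ misses at least two reductions'', both of which are immediate from the case hypothesis $N_L\le m_0-2$ applied to those two specific lines.
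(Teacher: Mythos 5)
Your proof is correct and follows essentially the same route as the paper's: both arguments reduce, after using an automorphism to normalize two suitable terms to $(1,0)$ and $(0,1)$, to either the ``only one term with a coprime coordinate'' situation of Lemma~\ref{b1} or a contradiction via Lemma~\ref{h2}. Your organization by lines of $\mathbb{F}_p^2$ is a cleaner, coordinate-free packaging of the paper's case analysis (your case $N_L=m_0$ corresponds to the paper's implicit appeal to Lemmas~\ref{5.1} and~\ref{seq}), but the underlying ideas and the lemmas invoked are the same.
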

\begin{proof}
According to Lemma \ref{b1} we may assume $\gcd(a_{1}a_{2},p)=1$, and this can be done for $\alpha = \beta$. Applying Lemma \ref{h4}, we may consider $S$ to be (abusing notation)
$$
S = (1,0)(a_{2},b_{2})(a_{3},b_{3}) \cdots (a_{m_{0}},b_{m_{0}}).
$$
and we still have $\gcd(a_{2},p)=1$. Again according to \ref{b1}, we may assume
that $\gcd(b_{3}b_{t},p)=1$, for some $t\in [2,m_{0}]$, and $t\neq 3$. Again, by lemma \ref{h4} (for $\alpha = \beta$), we may rewrite $S$ as (also abusing notation)
$$S = (1, 0) (0, 1) (a_3, b_3) \cdots (a_{m_{0}}, b_{m_{0}}),$$
Now we can apply  Lemma \ref{b1} to the two  sequences $(0, 1)(a_3, b_3) \cdots (a_{m_{0}}, b_{m_{0}})$ and $(1, 0) (a_3, b_3) \cdots (a_{m_{0}}, b_{m_{0}})$, and   guarantee that there must be an $a_{i}$ and an $b_{j}$, $i,j\in [3,m]$ (including the case where $i = j$), such that $\gcd(a_ib_j, p) = 1$. 
But since $m_{0} - 1 > 3^{3}$, this gives a contradiction with Lemma \ref{h2}.
Hence this case does not occur, and the only possibility for $S$ is the one stated in Lemma \ref{b1}, that is, $S$ has only one term $x_{j}$ such that
 $\gcd(b_{j},p)=1$ and only one term $x_{i}$ such that
 $\gcd(a_{i},p)=1$, for $\alpha = \beta$.
\end{proof}
\begin{lemma}\label{a1}
 Let us assume $\alpha >\beta$. If $S$ has only one term $x_{j}$ such that
 $\gcd(a_{j},p)=1$ then $S$ has all the properties stated in \eqref{carac}.
\end{lemma}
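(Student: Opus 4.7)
Without loss of generality I take $x_1$ to be the distinguished term with $\gcd(a_1,p)=1$, so $a_i=pa_i'$ for $i\ge 2$. A key initial observation is that $x_1$ cannot lie in any $A$-weighted zero-sum of $S$: the first-coordinate condition would force $p\mid u_1 a_1$, contradicting $\gcd(u_1 a_1,p)=1$. Hence $A$-weighted zero-sums of $S$ are in length-preserving bijection with those of the reduced sequence $S'=(a_2',b_2)\cdots(a_{m_0}',b_{m_0})$ viewed over $H=\Z_{p^{\alpha-1}}\oplus\Z_{p^\beta}$. Since $\alpha-1\ge\beta$, the induction hypothesis supplies $s_A(H)=p^{\alpha-1}+\alpha+\beta-1$ together with the structural property \eqref{carac2} for any sequence over $H$ of length $s_A(H)-1$ that is free of $A$-weighted zero-sums of length $p^{\alpha-1}$.

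The plan is to establish $Z(S)=p^\alpha-1$ by ruling out each range $Z(S)\in[qp^{\alpha-1},(q+1)p^{\alpha-1}-1]$ for $q\in[0,p-2]$. Given such $q$, I will extract $T_1,\ldots,T_q$ as sequences of $p^{\alpha-1}$ copies of $(0,0)$ (possible because $Z(S)\ge qp^{\alpha-1}$) and then use $s_A(H)$ greedily to extract $T_{q+1},\ldots,T_{p-1}$, each of length $p^{\alpha-1}$; a direct length count shows every step is feasible, leaving a residue $R'$ of length $s_A(H)-1$. If $R'$ contains an $A$-weighted zero-sum of length $p^{\alpha-1}$, then $T_1,\ldots,T_{p-1}$ together with it form $p$ disjoint such zero-sums in $S'$, yielding a forbidden length-$p^\alpha$ zero-sum in $S$. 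Otherwise, by the induction hypothesis, $R'$ has $p^{\alpha-1}-1$ zeros, forcing the total number of zeros in $T_{q+1},\ldots,T_{p-1}$ to equal $Z(S)-(q+1)p^{\alpha-1}+1$; for $Z(S)<(q+1)p^{\alpha-1}-1$ this is negative and thus impossible, and for $Z(S)=(q+1)p^{\alpha-1}-1$ each of $T_{q+1},\ldots,T_{p-1}$ must consist entirely of non-zeros.

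The boundary case $Z(S)=(q+1)p^{\alpha-1}-1$ will be dispatched by a swap argument, which is the main obstacle of the proof. Pick any $y\in T_{q+1}$; since $R'\cup\{y\}$ has length $s_A(H)$, it contains an $A$-weighted zero-sum $T^*=\{y\}\cup S_{T^*}$ of length $p^{\alpha-1}$ with $S_{T^*}\subset R'$ (necessarily $y\in T^*$, as $R'$ itself has no such zero-sum). The complement $R''=(T_{q+1}\setminus\{y\})\cup(R'\setminus S_{T^*})$ has length $s_A(H)-1$. Either $R''$ contains an $A$-weighted zero-sum of length $p^{\alpha-1}$, in which case $T_1,\ldots,T_q,T^*,T_{q+2},\ldots,T_{p-1}$ together with it produce $p$ disjoint zero-sums and a contradiction, or the induction hypothesis forces $Z(R'')=p^{\alpha-1}-1$; since $T_{q+1}\setminus\{y\}$ has no zeros, $S_{T^*}$ would then consist of $p^{\alpha-1}-1$ non-zeros drawn from the $\alpha+\beta-1$ non-zeros of $R'$, which is impossible because $p^{\alpha-1}>\alpha+\beta$ in our regime ($p\ge 3$, $\min(\alpha,\beta)\ge 3$, $\alpha>\beta$, where $3^{\alpha-1}$ grows exponentially in $\alpha$ while $\alpha+\beta$ is linear).

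Once $Z(S)\ge(p-1)p^{\alpha-1}$ is established, I will apply the same extraction template with all $T_k$'s chosen as zero sequences and the induction hypothesis to $R'$, which forces $Z(R')=p^{\alpha-1}-1$ and hence $Z(S)=p^\alpha-1$ (property (1)). The remaining structural claims follow: the $\alpha+\beta$ non-zero terms of $S$ are $x_1$ (of order $p^\alpha$, giving $\delta_\alpha(S)\ge 1$) together with the $\alpha+\beta-1$ non-zeros of $R'$ (satisfying $\delta_j(R')\ge 1$ for each $j\in[1,\alpha-1]$ by the induction hypothesis, orders being preserved under the identification of $H$ with $p\Z_{p^\alpha}\oplus\Z_{p^\beta}\subset G$). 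This yields $\delta_j(S)\ge 1$ for all $j\in[1,\alpha]$ and $\sum_{j=1}^\alpha \delta_j(S)=\alpha+\beta$, completing properties (2) and (3).
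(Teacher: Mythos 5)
Your proof is correct and follows essentially the same route as the paper's: pass to the reduced sequence $S'$ over $\Z_{p^{\alpha-1}}\oplus\Z_{p^{\beta}}$, extract $p-1$ disjoint $A$-weighted zero-sums of length $p^{\alpha-1}$, apply the inductive classification \eqref{carac2} to the residue of length $s_{A}(\Z_{p^{\alpha-1}}\oplus\Z_{p^{\beta}})-1$, and use a one-element swap together with the inequality $p^{\alpha-1}>\alpha+\beta$ to force all the zeros into the extracted blocks. The only real difference is bookkeeping: you stratify by the global zero count $Z(S)$ and fill blocks with zeros adaptively, whereas the paper fixes one greedy extraction and shows each block $T_{i}$ consists entirely of zeros via two further applications of the induction hypothesis (to $S_{2}T_{i}(S_{3})^{-1}$ and to $S_{2}T_{i}$ with $p^{\alpha-1}$ zeros removed).
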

\begin{proof}
Let us assume $\gcd(a_{1},p)=1$ (see Lemma \ref{5.1}) and $a_{j} \equiv 0\pmod{p}$ for all $j\in [2,m_{0}]$, and write $a_i=pa_i'$, then the sequence
$$S' = (a_2', b_2) (a_3', b_3) (a_4', b_4) \cdots (a_{m_{0}}, b'_{m_{0}})$$
is over $\Z_{p^{\alpha-1}}\oplus\Z_{p^{\beta}}$ and has length $m_{0}-1$.
Since 
$$
m_{0}-1= p^{\alpha} + \alpha + \beta - 2 = (p-1)p^{\alpha -1}  \alpha + \beta + (p^{\alpha -1} -2),
$$
and $s_{A}(\Z_{p^{\alpha-1}}\oplus\Z_{p^{\beta}})= p^{\alpha -1} + \alpha + \beta -1$ (see Theorem \ref{princ}), hence from the sequence $S'$ 
we extract $p-1$ disjoint $A$-weighted zero-sum subsequences, $T_1, T_2, \ldots, T_{p-1}$, each of length $p^{\alpha -1}$.

Let $S_{2}= S'(T_{1}T_{2}\cdots T_{p-1})^{-1}$,  the remaining subsequence of $S'$, once we extract  all the  disjoint subsequences $T_1, T_2, \ldots, T_{p-1}$. Thus $|S_{2}| = p^{\alpha -1} + (\alpha -1) + \beta -1$. Since $S'$ has no $A$-weighted zero-sum subsequence of length $p^{\alpha}$ (otherwise $S$ would have $A$-weighted zero-sum subsequence of length $p^{\alpha}$ contradicting the hypothesis),  $S_{2}$ can not have an $A$-weighted zero-sum subsequence of length $p^{\alpha -1}$, for this subsequence together with the subsequences  $T_1, T_2, \ldots, T_{p-1}$ would give an $A$-weighted zero-sum subsequence of length $p^{\alpha}$ for $S'$, an absurd.
Hence $S_{2}$ is a sequence over $\Z_{p^{\alpha-1}}\oplus\Z_{p^{\beta}}$, of length $p^{\alpha -1} + (\alpha -1) + \beta -1 = s_{A}(\Z_{p^{\alpha-1}}\oplus\Z_{p^{\beta}}) - 1$, with no $A$-weighted zero-sum subsequence of length $p^{\alpha}$. Thus we can apply the induction hypothesis and assume that $S_{2}$ has $p^{\alpha-1}-1$ terms equal to $(0,0)$ and  $(\alpha -1) + \beta$ terms different from $(0,0)$.

Take $x$ an element of some $T_{i}$, that is $x|T_{i}$, for some $i\in [1,p-1]$, and consider the sequence $xS_{2}$. Thus
$$
|xS_{2}|= p^{\alpha -1} + (\alpha -1) + \beta = s_{A}(\Z_{p^{\alpha-1}}\oplus\Z_{p^{\beta}}),
$$
hence  it has an $A$-weighted zero-sum subsequence $S_{3}$ of length $p^{\alpha -1}$ and we must have 
$x|S_{3}$. 
Now consider the sequence $U=S_{2}T_{i}(S_{3})^{-1}$ of length 
$$p^{\alpha -1} +\alpha + \beta -2 = s_{A}(\Z_{p^{\alpha-1}}\oplus\Z_{p^{\beta}}) -1.$$
If $U$ has an $A$-weighted zero-sum subsequence $T^{*}_{i}$ of length $p^{\alpha -1}$, then the sequence $T_{1}\cdots T^{*}_{i}\cdots T_{p-1}S_{3}$
is an $A$-weighted zero-sum subsequence of $S'$ of length $p^{\alpha}$, contrary to the hypothesis. Therefore we can apply the induction hypothesis and assume that 
$U$ contains $p^{\alpha -1} -1$ terms equal to $(0,0)$ and $\alpha +\beta -1$ terms different from $(0,0)$. Since $p^{\alpha -1} > \alpha + \beta$, for $p\geq 3$ and  $\alpha >\beta\geq 3$, we must assume that $T_{i}$ has at least one term equal to $(0,0)$.

Let us recall that the sequence $S_{2}$ has $p^{\alpha -1}-1$ terms equal to $(0,0)$,  the sequence $T_{i}$ has at least one term equal to $(0,0)$ and the two sequence are disjoint. Now consider the sequence 
$$
V= S_{2}T_{i}(\underbrace{(0,0)(0,0)\cdots (0,0)}_{p^{\alpha-1} \mbox{ vezes}})^{-1}.
$$
 By the same argument applied for the sequence $U$, we can also considerer that the sequence $V$ has no $A$-weighted zero-sum subsequence of length $p^{\alpha -1}$. Since $|V|= s_{A}(\Z_{p^{\alpha-1}}\oplus\Z_{p^{\beta}}) -1$, we can apply the induction hypothesis and assume that $V$ has $p^{\alpha -1} -1$ terms equal to $(0,0)$ and $\alpha + \beta -1$ terms different from $(0,0)$. Hence the sequence $S_{2}T_{i}$ has $2p^{\alpha -1} -1$ terms equal to $(0,0)$ and $\alpha + \beta -1$ terms different from $(0,0)$. But the sequence $S_{2}$ has exactly $p^{\alpha -1} -1$ terms equal to $(0,0)$ and $\alpha + \beta -1$ terms different from $(0,0)$, therefore we must have $T_{i} = (0,0)\cdots (0,0)$.
 
 Since this is true for any $T_{i}$, and $S_{2}$  have $p^{\alpha -1} -1$ terms equal to $(0,0)$, we have just proved that the sequence $S'$ has $p^{\alpha} -1$
 terms equal to $(0,0)$ and $\alpha + \beta -1$ terms different from $(0,0)$. Consequently, the sequence $S$ has $p^{\alpha} -1$
 terms equal to $(0,0)$ and $\alpha + \beta$ terms different from $(0,0)$, considering the term $x_{1}$ of order $p^{\alpha}$. This completes this proof.
\end{proof}

\begin{theorem} \label{princ2} Let $p$ be an odd prime and $S$ a sequence over the group $\Z_{p^\alpha}\oplus\Z_{p^\beta}$ with $\alpha \ge \beta$, and $|S| = p^\alpha + \alpha + \beta -1$. If $S$ has no $A$-weighted zero-sum subsequence of length $p^{\alpha}$, then $S$ has all the properties stated in \eqref{carac}.
\end{theorem}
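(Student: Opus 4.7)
The plan is to assemble Lemmas \ref{a=b}, \ref{b1}, and \ref{a1} so that they exhaust all admissible configurations of $S$, disposing of any remaining configuration by producing an $A$-weighted zero-sum subsequence of length $p^{\alpha}$ and thereby contradicting the hypothesis.

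If $\alpha = \beta$, Lemma \ref{a=b} gives \eqref{carac} at once, so I may assume $\alpha > \beta$. By Lemma \ref{5.1}, $S$ contains at least one term $x_i$ with $\gcd(a_i, p) = 1$ and at least one term $x_j$ with $\gcd(b_j, p) = 1$. If exactly one term of $S$ satisfies $\gcd(b_j, p) = 1$, Lemma \ref{b1} yields \eqref{carac}; if exactly one term satisfies $\gcd(a_i, p) = 1$, Lemma \ref{a1} yields \eqref{carac}.

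The remaining configuration is $\alpha > \beta$ together with at least two terms $x_{i_1}, x_{i_2}$ satisfying $\gcd(a_{i_1} a_{i_2}, p) = 1$ and at least two terms $x_{j_1}, x_{j_2}$ satisfying $\gcd(b_{j_1} b_{j_2}, p) = 1$. In this case I would show that $S$ admits an $A$-weighted zero-sum subsequence of length $p^{\alpha}$, contradicting the hypothesis. By Lemmas \ref{ob1} and \ref{h4} the pair $x_{i_1}, x_{i_2}$ may be normalized to $(1, 0)$ and $(1, c)$ for some $c \in \Z_{p^{\beta}}$. If $c = 0$, Lemma \ref{fi1} applied with $\ell = p^{\alpha}$ to a subsequence of length $p^{\alpha} + \beta$ containing the two equal terms supplies the required zero-sum. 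If $c \neq 0$, the argument splits on the existence of an overlap term $x_j$ with $\gcd(a_j, p) = \gcd(b_j, p) = 1$: in the overlap case, solving the linear equation $u_1 x_{i_1} + u_2 x_{i_2} = x_j$ over $A$ and invoking Lemma \ref{fi10} (with $\ell = p^{\alpha}$ and $k = 2$) produces the zero-sum; in the complementary case every term $x_{j_s}$ with $\gcd(b_{j_s}, p) = 1$ satisfies $p \mid a_{j_s}$, so placing one such term in position $2$ and applying Lemma \ref{h2} with $m = p^{\alpha}$ supplies the zero-sum.

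The main obstacle is the subcase $c \neq 0$ with overlapping types. Solving $u_1(1, 0) + u_2(1, c) = (a_j, b_j)$ over $A$ reduces, depending on $\gcd(c, p)$, to a non-vanishing residue condition of the form $a_j c - b_j \not\equiv 0 \pmod p$; if this condition fails for every overlap candidate $x_j$, one must exploit the abundance of unit-first-coordinate terms in $S$ to change the normalization (and hence shift the value of $c$), or extend the combination in Lemma \ref{fi10} to three or more terms in order to restore the required non-degeneracy. The residue-class bookkeeping, together with the verification that the length constraints of Lemmas \ref{fi1}, \ref{fi10}, and \ref{h2} are met by suitable subsequences of $S$, is where the most care is required.
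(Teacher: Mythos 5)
Your reduction to the case $\alpha>\beta$ with at least two unit-first-coordinate terms and at least two unit-second-coordinate terms matches the paper, and your treatment of the non-degenerate subcases (equal terms via Lemma \ref{fi1}, a term with $\gcd(b_i,p)=1$, $p\mid a_i$ via Lemma \ref{h2}, and an ``overlap'' term via Lemma \ref{fi10}) is essentially the paper's argument. The genuine gap is your governing assumption that the entire remaining configuration must yield an $A$-weighted zero-sum subsequence of length $p^{\alpha}$, i.e.\ a contradiction. The degenerate subcase you flag --- after normalizing to $S=(1,0)(1,b_2)(1,b_3)\cdots(1,b_t)\cdots$ with $\gcd(b_2\cdots b_t,p)=1$, all remaining terms having both coordinates divisible by $p$, and $b_j\equiv b_2\pmod p$ for all $j$ (equivalently, your condition $a_jc-b_j\equiv 0\pmod p$ failing for every candidate) --- is precisely where no contradiction is forthcoming, and your proposed remedies (shifting the normalization, or lengthening the combination fed to Lemma \ref{fi10}) do not produce one. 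The paper resolves this subcase not by contradiction but by applying the automorphism $\Theta(x,y)=(x,\,y-b_2\mu(x))$ of Lemma \ref{h4}, which sends $(1,b_j)$ to $(1,k_jp)$ and $(1,0)$ to $(1,-b_2)$, so that $\Theta(S)$ has exactly one term with unit second coordinate; Lemma \ref{b1} then shows $\Theta(S)$, hence $S$, satisfies \eqref{carac}. In other words, the degenerate configuration is (up to automorphism) exactly where the extremal sequences of Lemma \ref{b1} reappear, so it must be absorbed into the conclusion rather than eliminated. Your outline as written leaves this subcase unproved, and the strategy of ``restoring non-degeneracy'' to force a contradiction is the wrong move; replacing it with the automorphism reduction to Lemma \ref{b1} closes the argument.
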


\begin{proof} Let us consider as before  $S = x_1x_2 \cdots x_{m_{0}}$,
with  $x_i = (a_i, b_i)$, for all $i\in [1,m_{0}]$, and assume that $S$ has no $A$-weighted zero-sum subsequence of length $p^{\alpha}$. According to Lemmas  \ref{5.1}, \ref{a=b} and \ref{a1}, we may also assume that $\alpha > \beta$ and, with no loss in generality, that $\gcd(a_{1}a_{2},p)=1$.

Applying Lemma \ref{h4}, we can rewrite $S$ as 
$$S = (1, 0) (a_2, b_2) (a_3, b_3) \cdots (a_{m_{0}}, b_{m_{0}}),$$
and still have $\gcd(a_{2},p)=1$. By Lemmas \ref{5.1} and \ref{b1} we may assume, without loss of generality, that $\gcd(b_{3}b_{t},p)=1$, for some $t\in [2, m_{0}]$ and $t\neq 3$. As $\gcd(a_{2}b_{3},p)=1$, if there is $i\in[4,p^\alpha + \alpha + \beta-1]$ such that $\gcd(b_i, p) = 1$ and $p|a_i$, then we can apply Lemma \ref{h2} and find an $A$-weighted zero-sum subsequence of $S$ of length $p^{\alpha}$, a contradiction. 

Hence let us assume, with no loss in generality, and applying Lemma \ref{ob1}, that $S$ can be rewritten as
$$S = (1, 0) (1, b_2) (1, b_3) \cdots (a_{m_{0}}, b_{m_{0}}),$$
with $\gcd(b_{2}b_{3},p)=1$. If there is $i \in[4, m_{0}]$ such that $\gcd(a_i, p) = 1$ and $p|b_i$, then we have
$$(a_ib_3 - b_i)x_2 - a_ib_2x_3 + b_2x_i = (a_ib_3 - b_i)(1,0),$$
with $(a_ib_3 - b_i)\in A$. As done before, this shows that the hypothesis of Lemma \ref{fi10} are satisfied, and consequently $S$ has an $A$-weighted zero-sum subsequence of length $p^\alpha$, giving also a contradiction.
The only situation left to be analysed is to consider $S$ as (see Lemma \ref{ob1})
\begin{equation}\label{S0}
S= (1,0)(1,b_2)(1,b_3)\cdots(1,b_t)(a_{t+1},b_{t+1})\cdots(a_{m_{0}},b_{m_{0}}),
\end{equation}
for some  $3 \le t \le m_{0}$, and $\gcd(b_2b_3\cdots b_t,p)=1$, $p|a_i$ and $p|b_i$, for all $i\ge t+1$.
We are now in the same situation presented in the proof of Lemma \ref{a>b},
and we repeat the arguments for completion. 
If $b_i\not\equiv b_j \pmod p$, for  $i,j\in\{2,3,\ldots, t\}$, then 
$$
b_ix_j - b_jx_i = (b_i-b_j)(1, 0).
$$ 
Since $b_i-b_j\in A$, we can repeat the arguments above, using Lemma \ref{fi10},
with $\ell = p^{\alpha}$ and $k=2$,  to obtain  an $A$-weighted zero-sum subsequence of length $p^{\alpha}$, which gives a contradiction. So let us assume that $b_{j}=b_{2} + k_{j}p$, for all $j\in [2,m]$.

By the proof of Lemma \ref{h4}, we see that the automorphism $\Theta(x,y)= (x, y - b_2\mu(x))$ has the following properties:
$$
\Theta(1,b_{2})=(1,0),\; \Theta(1, 0) = (1,-b_{2}), \; \Theta(1, b_{2}+ np) = (1,np),\; \mbox {and} \;\Theta(a, bp) = (a, b'p).
$$

Thus  (see \eqref{S0}),
$$\Theta(S)= (1,-b_2)(1,0)(1,k_3p)\cdots(1,k_tp)(a_{t+1},pb'_{t+1})\cdots(a_{m_{0}},pb'_{m_{0}}),$$
that is,  $\Theta(S)= (c_{1},d_{1})\cdots (c_{m_{0}} ,d_{m_{0}})$ and $d_{j}\equiv 0 \pmod{p}$ for all $j\in [2,m]$. Now we can use Lemma \ref{b1} to conclude that this sequence satisfies the properties stated in \eqref{carac}. Since the any automorphism  preserves the order of the elements of a group, we can apply the inverse automorphism $\Theta^{-1}$ and conclude that  $S$ also  satisfies the properties stated in \eqref{carac}, completing the proof of the theorem.
\end{proof}

\section{The constant $\eta_{A}(\Z_{p^\alpha} \oplus \Z_{p^\beta})$}

For an odd prime $p$, Theorems \ref{princ} and \ref{princ2} present the exact value of $s_A(\Z_{p^\alpha} \oplus \Z_{p^\beta})$  and also classify all the sequences of length $s_A(\Z_{p^\alpha}\oplus\Z_{p^\beta}) - 1$ which are $A$-weighted zero-sum free sequences of length $p^\alpha$. A direct consequence of these results is the following theorem.

\begin{theorem} Let $p$ be an odd prime and $G = \Z_{p^{\alpha}}\oplus\Z_{p^\beta}$.  Then $\eta_A(G) =  \alpha + \beta + 1$.
\end{theorem}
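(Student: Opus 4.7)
The plan is to derive both inequalities $\eta_A(G) \le \alpha + \beta + 1$ and $\eta_A(G) \ge \alpha + \beta + 1$, the first from Theorem \ref{princ} by a padding trick, the second by displaying an explicit extremal sequence.

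For the upper bound, the strategy is to reduce to a sequence of length $s_A(G)$ by appending copies of $(0,0)$. Given any sequence $T$ over $G$ of length $\alpha + \beta + 1$, I would form $S = T \cdot (0,0)^{p^{\alpha}-1}$, so that $|S| = p^{\alpha} + \alpha + \beta = s_A(G)$. Theorem \ref{princ} then yields an $A$-weighted zero-sum subsequence $S^{\ast}$ of $S$ of length exactly $p^{\alpha}$. Removing the $(0,0)$-entries from $S^{\ast}$ produces a subsequence $T^{\ast}$ of $T$ whose weighted sum is unchanged, since $u\cdot(0,0) = (0,0)$ for every $u\in A$. The length $|T^{\ast}|$ is at most $p^{\alpha}$, and it is non-empty because only $p^{\alpha} - 1$ of the terms of $S^{\ast}$ can come from the padding. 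Hence $T$ itself has an $A$-weighted zero-sum subsequence of length at most $p^{\alpha}$, proving $\eta_A(G) \le \alpha + \beta + 1$.

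For the lower bound, I would exhibit the sequence
$$
S_0 = (1,0)(p,0)(p^{2},0)\cdots (p^{\alpha-1},0)(0,1)(0,p)\cdots (0,p^{\beta-1})
$$
of length $\alpha + \beta$ (the non-zero part of the extremal sequence in Theorem \ref{princ}) and verify that no non-empty subsequence admits an $A$-weighted zero-sum. A candidate weighted sum is a pair
$\bigl(\sum_{i\in I}u_i p^{i},\,\sum_{j\in J}v_j p^{j}\bigr)$
with $I\subseteq[0,\alpha-1]$, $J\subseteq[0,\beta-1]$, and $u_i,v_j\in A$. The argument proceeds coordinate by coordinate: if $I$ is non-empty with minimum $i_0$, then $\gcd(u_{i_0},p)=1$ forces the $p$-adic valuation of the first coordinate to equal $i_0 < \alpha$, so that coordinate is non-zero in $\Z_{p^{\alpha}}$; the analogous statement for $J$ shows the second coordinate is non-zero in $\Z_{p^{\beta}}$. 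Consequently both $I$ and $J$ must be empty, and no non-empty weighted zero-sum exists.

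The only subtlety is the non-emptiness of $T^{\ast}$ in the upper-bound argument, and that is automatic from $|S^{\ast}|=p^{\alpha}>p^{\alpha}-1$. I do not expect any genuine obstacle: the entire proof is one invocation of Theorem \ref{princ} combined with the two-coordinate valuation check above.
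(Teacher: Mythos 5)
Your proof is correct and follows essentially the same route as the paper: both obtain the upper bound by padding $T$ with $p^{\alpha}-1$ copies of $(0,0)$ and invoking Theorem \ref{princ}, and both use the same length-$(\alpha+\beta)$ sequence of the elements $(p^{i},0)$ and $(0,p^{j})$ for the lower bound. The only differences are that you spell out the $p$-adic valuation check that the paper dismisses as ``simple to see,'' and that in your removal step you should note the degenerate case where $T$ itself contains a $(0,0)$ term (then $T$ trivially has a weighted zero-sum of length $1$), after which the argument is complete.
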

\begin{proof}
Let $S$ be a sequence over $G$ of length $\alpha + \beta + 1$ and consider the sequence
$$
S^{*} = S\underbrace{(0,0) (0,0) \cdots (0,0)}_{p^{\alpha}-1\textrm{ terms}}.
$$
By theorem \ref{princ}, this sequence has an $A$-weighted zero-sum  subsequence of length $p^\alpha$. Since $p^{\alpha} \geq \alpha + \beta + 1$, then $S$ must then have an $A$-weighted zero-sum  subsequence of length $n\leq p^{\alpha}$ as desired.

On the other hand it is simple to see that the sequence of length $ \alpha + \beta$
$$
(0,1)(0,p)\cdots (0, p^{\beta -1})(1,0)(p,0)\cdots (p^{\alpha -1},0)
$$
has no $A$-weighted zero-sum  subsequence of length $n\leq p^{\alpha}$.
\end{proof}

Now the following corollary  is immediate.
\begin{corollary} Let $p$ be an odd prime and $G = \Z_{p^{\alpha}}\oplus\Z_{p^\beta}$.  Then $s_{A}(G)= \eta_A(G) + \mathrm{exp}(G) - 1$.
\end{corollary}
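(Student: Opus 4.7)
The plan is to read off both sides of the identity directly from the two preceding theorems and verify the arithmetic. By Theorem~\ref{princ} we have $s_A(G) = p^\alpha + \alpha + \beta$, and the theorem immediately preceding this corollary establishes $\eta_A(G) = \alpha + \beta + 1$. Since $\alpha \ge \beta$, the exponent of $G = \Z_{p^\alpha}\oplus\Z_{p^\beta}$ is $\exp(G) = p^\alpha$. Substituting gives
$$
\eta_A(G) + \exp(G) - 1 \;=\; (\alpha+\beta+1) + p^\alpha - 1 \;=\; p^\alpha + \alpha + \beta \;=\; s_A(G),
$$
which is the claimed equality.

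There is no real obstacle here: all the substantive work was carried out in establishing Theorem~\ref{princ}, via the simultaneous induction on $\alpha+\beta$ and the structural lemmas of Section~3, and in the short padding argument (appending $p^\alpha-1$ copies of $(0,0)$) that yielded the formula for $\eta_A(G)$. The corollary itself is a one-line verification, so I would simply state it as such and leave it at that.
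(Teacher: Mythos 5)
Your proof is correct and matches the paper exactly: the paper states this corollary as an immediate consequence of Theorem~\ref{princ} and the preceding theorem on $\eta_A(G)$, and your substitution $(\alpha+\beta+1) + p^\alpha - 1 = p^\alpha + \alpha + \beta$ is precisely that verification.
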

\begin{theorem}Let $p$ be an odd prime and $S$ a sequence over the group $\Z_{p^\alpha}\oplus\Z_{p^\beta}$ with $\alpha \ge \beta$, and $|S| =  \alpha + \beta$. If $S$ has no $A$-weighted zero-sum subsequence of length $ n\leq p^{\alpha}$, then  $\delta_j(S) \ge 1$, for all $j \in [1, \alpha]$, and $\sum_{j=1}^{\alpha} \delta_j(S) = \alpha + \beta$.
\end{theorem}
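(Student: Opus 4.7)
The plan is to reduce this statement to the extremal classification already obtained in Theorem \ref{princ2} by padding $S$ with zero terms. Given $S$ of length $\alpha+\beta$ with no $A$-weighted zero-sum subsequence of length at most $p^{\alpha}$, I would form
$$S^{*} = S\,\underbrace{(0,0)(0,0)\cdots(0,0)}_{p^{\alpha}-1 \text{ terms}},$$
so that $|S^{*}| = p^{\alpha} + \alpha + \beta - 1 = s_{A}(G) - 1$.

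The next step is to verify that $S^{*}$ has no $A$-weighted zero-sum subsequence of length exactly $p^{\alpha}$. Suppose one existed; it would consist of $k$ copies of $(0,0)$ taken from the padding, for some $0 \le k \le p^{\alpha}-1$, together with $p^{\alpha}-k \ge 1$ terms extracted from $S$. Since the $(0,0)$ terms contribute nothing to any $A$-weighted sum, the remaining $p^{\alpha}-k$ terms would constitute, on their own, an $A$-weighted zero-sum subsequence of $S$ of length between $1$ and $p^{\alpha}$, contradicting the hypothesis on $S$. Therefore $S^{*}$ fits exactly within the hypotheses of Theorem \ref{princ2}.

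Applying Theorem \ref{princ2} to $S^{*}$, we conclude that $S^{*}$ contains precisely $p^{\alpha}-1$ terms equal to $(0,0)$, that $\delta_{j}(S^{*}) \ge 1$ for every $j \in [1,\alpha]$, and that $\sum_{j=1}^{\alpha}\delta_{j}(S^{*}) = \alpha+\beta$. Since the padding itself already accounts for $p^{\alpha}-1$ copies of $(0,0)$, no term of $S$ can equal $(0,0)$. Because appending zeros does not alter $\delta_{j}$ for any $j \ge 1$, we have $\delta_{j}(S) = \delta_{j}(S^{*})$ for all $j \in [1,\alpha]$, which immediately yields $\delta_{j}(S) \ge 1$ for every such $j$ and $\sum_{j=1}^{\alpha}\delta_{j}(S) = \alpha+\beta$, as desired.

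There is no serious obstacle here, since all the heavy combinatorial work has already been carried out in Theorem \ref{princ2}. The only delicate point is the reduction step: the observation that a hypothetical $A$-weighted zero-sum subsequence of $S^{*}$ of length exactly $p^{\alpha}$ would restrict to a nontrivial $A$-weighted zero-sum subsequence of $S$ of length at most $p^{\alpha}$, which is precisely what the hypothesis excludes.
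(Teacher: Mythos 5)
Your proposal is correct and follows exactly the paper's own argument: pad $S$ with $p^{\alpha}-1$ copies of $(0,0)$, note that a length-$p^{\alpha}$ $A$-weighted zero-sum subsequence of $S^{*}$ would restrict to a nonempty one of $S$ of length at most $p^{\alpha}$, and then invoke Theorem \ref{princ2}. Your write-up is in fact slightly more careful than the paper's, which leaves the reduction step as an observation.
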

\begin{proof} Consider again the sequence 
$$
S^{*} = S\underbrace{(0,0) (0,0) \cdots (0,0)}_{p^{\alpha}-1\textrm{ terms}},
$$
and observe that $S^{*}$ has no $A$-weighted zero-sum subsequence of length $p^{\alpha}$.  By Theorem \ref{princ2}, the sequence $S^{*}$ contains $p^{\alpha} -1$ terms equal to $(0,0)$, $\delta_j(S^{*}) \ge 1$, for all $j \in [1, \alpha]$, and $\sum_{j=1}^{\alpha} \delta_j(S^{*}) = \alpha + \beta$. Hence the sequence $S$ has the desired properties stated in the theorem.
\end{proof}


\end{document}